\documentclass{elsarticle}

\usepackage{geometry}
\usepackage{graphicx}
\usepackage{amssymb}
\usepackage{amsmath}
\usepackage{amsthm}
\usepackage{empheq}
\usepackage{enumitem}
\usepackage{mdframed}
\usepackage{booktabs}
\usepackage{lipsum}
\usepackage{graphicx}
\usepackage{color}
\usepackage{psfrag}
\usepackage{pgfplots}
\usepackage{bm}
\usepackage{url}
\usepackage{array}
\usepackage{float}
\usepackage{mathrsfs}
\usepackage{tikz}
\usepackage{subcaption}
\usetikzlibrary{calc}

\graphicspath{{fig/}}

\DeclareMathAlphabet{\mathpzc}{OT1}{pzc}{m}{it}

\biboptions{sort&compress} 
\journal{ }
\usepackage[colorlinks=true]{hyperref}
\AtBeginDocument{
  \hypersetup{
    linkcolor=red,
    citecolor=green,
    urlcolor=magenta,
  }
}

\definecolor{ocre}{RGB}{243,102,25}
\definecolor{mygray}{RGB}{243,243,244}
\definecolor{deepGreen}{RGB}{26,111,0}
\definecolor{shallowGreen}{RGB}{235,255,255}
\definecolor{deepBlue}{RGB}{61,124,222}
\definecolor{shallowBlue}{RGB}{235,249,255}

\newtheoremstyle{mytheoremstyle}{3pt}{3pt}{\normalfont}{0cm}{\rmfamily\bfseries}{}{1em}{{\color{black}\thmname{#1}~\thmnumber{#2}}\thmnote{\,--\,#3}}
\newtheoremstyle{myproblemstyle}{3pt}{3pt}{\normalfont}{0cm}{\rmfamily\bfseries}{}{1em}{{\color{black}\thmname{#1}~\thmnumber{#2}}\thmnote{\,--\,#3}}
\theoremstyle{mytheoremstyle}
\newmdtheoremenv[linewidth=1pt,backgroundcolor=shallowGreen,linecolor=deepGreen,leftmargin=0pt,innerleftmargin=20pt,innerrightmargin=20pt,]{theorem}{Theorem}[section]
\theoremstyle{mytheoremstyle}
\newmdtheoremenv[linewidth=1pt,backgroundcolor=shallowBlue,linecolor=deepBlue,leftmargin=0pt,innerleftmargin=20pt,innerrightmargin=20pt,]{definition}{Definition}[section]
\theoremstyle{myproblemstyle}

\newtheorem{thm}{Theorem}[section]

\newtheorem{lem}{Lemma}[section]
\newtheorem{prop}{Proposition}[section]
\newtheorem{rmk}{Remark}[section]

\theoremstyle{definition}

\theoremstyle{remark}
\newcommand{\ben}{
\begin{eqnarray}}
  \newcommand{\een}{
\end{eqnarray}}
\newcommand{\bea}{
\begin{array}}
  \newcommand{\eea}{
\end{array}}

\numberwithin{equation}{section}
\usepgfplotslibrary{colorbrewer}
\pgfplotsset{width=8cm,compat=1.9}

\def\b0{\mathbf{0}}

\title{An Efficient Solver for the Richards Equation for
Variably Saturated Flows in Porus Media }

\begin{document}
\begin{frontmatter}
  \author{Xuelong Gu$^{a}$}
  \author{and Qi Wang$^{a,*}$}

  \address[1]{Department of Mathematics, University of South
  Carolina, Columbia, SC, 29208, USA}


  \begin{abstract}
    We present a nonlinear multigrid solver for the Richards equation
    in variably saturated porous media with strongly nonlinear
    hydraulic conductivity and water-retention relationships. The
    governing equation is discretized using a second-order
    conservative finite-difference scheme in space and an implicit
    backward difference formula in time. The core of the solver is a Nonlinear
    Gauss-Seidel (NGS) smoother based on a triangular splitting of
    the diffusion operator and diagonal stabilization.
    This splitting leads to an update scheme consisting of a sequence
    of locally decoupled scalar nonlinear problems, each of which can
    be solved efficiently and robustly using a few Newton iterations.
    Under monotonicity
    assumptions, we establish $L^\infty$-convergence of the NGS
    iteration and derive explicit conditions for the stabilization
    parameters. Numerical results for benchmark infiltration,
    drainage, and root-uptake problems demonstrate that the NGS-based
    multigrid framework is both computationally efficient and robust.

  \end{abstract}

  \begin{keyword}
    Richards equation;
    Nonlinear Gauss-Seidel;
    Nonlinear Multigrid;
    Iterative Solvers;
    Porus media.
  \end{keyword}

\end{frontmatter}

\begin{figure}[b]
  \small \baselineskip=10pt
  \rule[2mm]{1.8cm}{0.2mm} \par
  $^{*}$Corresponding author.\\
  E-mail address: \url{qwang@math.sc.edu} (Q. Wang).
\end{figure}

\section{Introduction}


The Richards equation has been used to model the spatiotemporal
dynamics of root zone (e.g., top 1 m of soil) soil moisture from
precipitation and surface soil moisture inpact for several
decades \cite{Richards1931}. It describes irrigation, precipitation,
evapotranspiration, runoff, and drainage dynamics in the soil zone in
the following form
\begin{equation} \label{eq:model}
  \left \{\bea{l}
    \partial_t \theta(\psi) + \nabla \cdot \mathbf{q} = -S(\psi), \\\\
    \mathbf{q} = -K(\theta(\psi)) \nabla(\psi + z),
    \eea\right.
  \end{equation}
  where, \(\psi\) represent the pressure head (m), \(q\)
  represents the water flux (m\(^3\)/m\(^2\cdot\)s), \(S\) is the
  source/sink term associated with root water uptake (s\(^{-1}\)),
  \(\theta\) denotes the soil moisture content (m\(^3\)/m\(^3\)), \(K\)
  is the unsaturated hydraulic conductivity (m/s), \(t \in [0,T]\) is
  time, and \(z\) denotes the vertical depth (m).
  The Richards equation is a nonlinear convection-diffusion equation,
  in which the flux term arises from the Darcy's law with gravity.


  The nonlinearity in \eqref{eq:model} is entirely encoded in the
  constitutive relations $\theta(\psi)$ and $K(\psi)$ (or $K(\theta)$),
  which are specified through empirical water-retention curves (WRCs)
  and hydraulic-conduictivity functions (HCFs). Common choices include
  the Gardner model \cite{gardner1958}, the Haverkamp formulation
  \cite{haverkamp1977}, and the widely used Mualem–van
  Genuchten model \cite{mualem1976, vangenuchten1980};
  see Table~\ref{tab:models} for a few examples.
  \begin{table}[H]
    \centering
    \begin{tabular}{|c| c| c|}
      \toprule
      \textbf{Model} & \textbf{HCF \(K(\psi)\) or \(K(\theta)\)} &
      \textbf{WRC \(\theta(\psi)\)} \\
      \midrule

      Haverkamp et al. (1977) &
      $ K_s \tfrac{A}{A + |\psi|^\gamma} $ &
      $ \theta_r + \tfrac{\alpha(\theta_s - \theta_r)}{\alpha +
      |\psi|^\beta} $ \\
      \midrule
      Mualem (1976); &&\\
      Van Genuchten (1980) &
      $ K_s \sqrt{ \tfrac{\theta - \theta_r}{\theta_s - \theta_r} }
      \left[ 1 - \left( 1 - \left( \tfrac{\theta - \theta_r}{\theta_s -
      \theta_r} \right)^{1/m} \right)^m \right]^2 $ &
      \( \theta_r +
        \tfrac{\theta_s - \theta_r}{\left(1 + (\alpha
      |\psi|)^n\right)^{\frac{n-1}{n}}} \) \\
      \midrule
      Gardner (1958) &
      \( K_s e^{\alpha \psi}, \psi<0; K_s ,\psi\geq 0 \)&
      \( \theta_r + (\theta_s - \theta_r)e^{\alpha \psi}, \psi<0;
      \theta_s, \psi\geq 0\) \\
      \bottomrule
    \end{tabular}
    \caption{Commonly used hydraulic conductivity (HCF) and water
    retention curve (WRC) models.}
    \label{tab:models}
  \end{table}
  These models are highly nonlinear and often exhibit rapid variations
  of $\theta$ and $K$ over relatively small ranges of $\psi$,
  especially near saturation or under very dry conditions.  As a
  consequence, analytic solutions of the Richards equation are rarely
  available; so, efficient and  robust numerical
  approximations are essential.

  Over the past decades, a broad spectrum of discretizations and solution
  strategies have been developed for the Richards equation, including
  finite difference \cite{celia1990, Gasiorowski2020}, finite volume
  \cite{Song2026, Eymard1999, Lai2015},
  and finite element methods \cite{celia1990}, as well as mixed
  and mass‑conservative variants. In fully discrete form these
  approaches lead, at each time level, to large systems of nonlinear
  algebraic equations whose coefficients are strongly heterogeneous and
  can vary by several orders of magnitude in space and time. The
  resulting systems are challenging for standard nonlinear solvers:
  Picard or fixed‑point iterations \cite{Song2026, Gasiorowski2020,
  celia1990} may
  require very small time steps or heavy damping to converge, while
  Newton‑type methods, though typically faster when they converge, can
  become fragile in the presence of sharp wetting fronts, highly
  heterogeneous soils, or strongly nonlinear sink terms. Ensuring
  robustness across a wide range of soil types and hydrologic regimes,
  while retaining high spatial accuracy and computational efficiency,
  therefore remains an active research topic.

  In parallel to these grid-based discretization approaches, there has
  recently been growing interest in physics-informed neural networks
  (PINNs) as mesh-free, data-integrated solvers for the
  Richards equation. PINN formulations approximate the
  pressure head and related quantities by neural networks whose loss
  functions penalize both mismatch with measured data and violation of
  the governing PDE, boundary, and initial conditions in $L_2$ norms.
  For example,
  Bandai and Ghezzehei \cite{Bandai2021PINN,Bandai2022HESS} proposed
  PINN frameworks with monotonicity constraints and domain
  decomposition to estimate WRCs and HCFs, reconstruct soil-water fluxes,
  and handle layered soils with discontinuous conductivities using sparse
  soil moisture observations. Chen et al.\ \cite{Chen2023PINN} designed
  principled loss-weighting strategies to improve the robustness of
  RRE-solving PINNs, while Haruzi and Moreno
  \cite{Haruzi2023WRR} trained PINNs with geoelectrical data to model
  unsaturated flow and solute transport. Although the PINN approach
  is easy and straightforward to implement and versatile in handling
  complex geometries and boundary conditions, but it suffers from low
  order of accuracy and slow training time. It under-performs in
  simple geometries and classical boundary conditions while compared
  with the good traditional methods.

  More recently, workflow-type
  PINN models have been developed for forward and inverse modeling of
  unsaturated flow and root water uptake from hydrogeophysical data
  \cite{Sakar2025JHydrol}. These studies demonstrate the potential of
  PINNs as flexible surrogates and a straightforward formulation for
  inverse problems concerning the Richards
  equation. However, they also highlight challenges such as sensitivity to
  loss balancing and random initialization, substantial training
  costs,  slow convergence and low accuracy. In inverse problems for
  identifying conductivity coefficients, the Richards equation must
  be solved repeatedly. An efficient and robust solver is thus
  essential to ensure the accuracy of the solution process. In this
  paper we focus on developing robust, mass-conservative
  grid-based discretizations and iterative solvers for the Richards
  equation, which can also serve
  as benchmarks and building blocks for future inverse problem solver
  based on PINN developments or Kalman filter/SLE approaches \cite{Yeh-SLE}.

  In this work, we consider a conservative second‑order finite‑difference
  discretization of the Richards equation in space combined with
  backward differentiation formulas
  in time. The spatial discretization is based on a standard
  cell‑centered grid with face‑centered averages of the hydraulic
  conductivity, leading to a discrete diffusion operator that is
  symmetric, positive definite, and locally conservative. The temporal
  discretization employs a fully implicit backward Euler (BDF1) scheme.
  In our formulation, the storage term $\theta(\psi)$ and
  the sink term $S(\psi)$ are treated implicitly at the new time level,
  while the diffusion operator $-\nabla\cdot(K\nabla(\bullet))$ is
  evaluated at a lagged state. This yields at each time step a nonlinear
  system in the pressure head whose nonlinearity is confined locally
  and admits a favorable monotonicity structure under
  assumptions on $\theta$ and $S$.

  A main challenge is how to construct a nonlinear solver that is (i) robust
  for strongly nonlinear WRC/HCF pairs and realistic root‑uptake
  functions, (ii) efficient on fine grids and in higher dimensions, and
  (iii) amenable to rigorous convergence analysis. To this end, we
  propose a nonlinear Gauss–Seidel (NGS) smoother based on a triangular
  splitting of the discrete diffusion operator together with local
  diagonal shifts. More precisely, we decompose the discrete operator
  $\mathbf{M}$ associated with $-\nabla\cdot(K\nabla(\bullet))$ into its
  strictly lower, diagonal, and strictly upper triangular parts,
  \begin{equation}
    \mathbf{M} = \mathbf{L} + \mathbf{D} + \mathbf{U},
  \end{equation}
  and introduce a diagonal perturbation
  $\bm{\Lambda}_\kappa$ to enhance diagonal dominance. This leads to a
  modified splitting
  \begin{equation}
    \mathbf{M} = \mathbf{L} + \mathbf{D}_\kappa + \mathbf{U}_\kappa,
  \end{equation}
  where $U_{\kappa}$ is an upper triangular matrix, which in turn
  yields a pointwise NGS iteration where each update
  reduces to solving a scalar nonlinear equation. A stabilization
  parameter $\xi > 0$ is incorporated in the temporal discretization to
  further strengthen the diagonal dominance and improve convergence of the local
  Newton solver. Under natural monotonicity assumptions on the
  constitutive laws, we prove that the resulting NGS iteration is a
  contraction in the discrete $L^\infty$ norm, provided $\xi$ and the
  diagonal shifts $\bm{\Lambda}_\kappa$ satisfy explicit bounds derived
  from the row sums of $\mathbf{U}_\kappa$. This analysis also offers
  practical guidelines for choosing $\xi$ and $\bm{\Lambda}_\kappa$ so
  as to balance robustness and efficiency.

  To further accelerate the solution of the fully discrete Richards
  system, we embed the proposed NGS iteration into a nonlinear multigrid
  method based on the full approximation scheme (FAS) \cite{Henson2003,
  Briggs2000}.
  On each grid level, the same pointwise NGS update is used as a
  relaxation process, while FAS coarse-grid corrections treat the
  remaining low-frequency components of the error. As the result, the NGS
  scheme and FAS are combined into a robust nonlinear solver for the
  Richards equation. Numerical experiments on benchmark
  infiltration, drainage, and root-uptake cases show that the
  resulting NGS–FAS algorithm converges reliably over a wide range of
  soil parameters and hydrologic regimes.

  The remainder of the paper is organized as follows. In
  Section~\ref{sec:discretization} we present the spatial
  finite‑difference discretization and the fully discrete BDF schemes
  for the Richards system, and we summarize the structural properties of
  the resulting discrete operators. Section~\ref{sec:smoother} introduces
  the nonlinear Gauss–Seidel smoother. Then, we establish convergence
  of the smoother in the discrete $L^\infty$ norm under suitable
  monotonicity assumptions on the constitutive relations. Numerical
  experiments are reported in Section~\ref{sec:experiment}, where we
  examine the accuracy and efficiency of the proposed methods on a
  variety of test problems. Finally, Section~ summarizes the results and
  discusses possible extensions.

  \section{Fully discrete Richards system}\label{sec:discretization}

  In this section, we develop a fully discrete scheme for the Richards
  system. For definiteness, the spatial discretization is presented
  with a second-order finite‑difference method; the analysis presented
  in subsequent sections rely only on structural properties of the
  resulting discrete operators, and therefore extends verbatim to other
  spatial discretizations that share these properties.

  We impose homogeneous Dirichlet boundary conditions for simplicity.
  Let $d\in\{1,2,3\}$ and $\Omega=\prod_{\ell=1}^d
  (0,L_\ell)\subset\mathbb{R}^d$. For each direction $\ell$, choose
  $N_\ell\in\mathbb{N}$ interior nodes and set
  $h_\ell:=L_\ell/(N_\ell+1)$.
  Define the index set and interior nodes
  \begin{equation}
    \mathcal{I}=\prod_{\ell=1}^d\{1,2,\dots,N_\ell\},\quad
    x_{\bm{i}}=((i_1 + \tfrac{1}{2})h_1,\dots,(i_d +
    \tfrac{1}{2})h_d),\quad \bm{i}=(i_1,\dots,i_d)\in\mathcal{I}.
  \end{equation}
  We denote $\psi_{\bm{i}}$ as the approximation of $\psi(x_{\bm{i}})$ and
  $\bm{e}_\ell$ as the $\ell$-th canonical unit vector in
  $\mathbb{Z}^d$. For each interior face orthogonal to direction
  $\ell$, we define the face average
  \begin{equation}
    K_{\bm{i}+\frac{1}{2}\bm{e}_\ell}
    := \tfrac{1}{2} \left[K\Big(\psi_{\bm{i}}\Big) +
    K\left(\psi_{\bm{i} + \frac{1}{2} \bm{e}_\ell} \right) \right],
    \quad 1\le i_\ell\le N_\ell-1,
  \end{equation}
  The second-order conservative finite-difference semi-discretization
  of \eqref{eq:model} then reads, for all $\boldsymbol{i}\in\mathcal{I}$,
  \begin{equation} \label{eq:discrete-strong}
    \partial_t \theta(\psi_{\bm{i}})-\sum_{\ell=1}^d
    \tfrac{1}{h_\ell}\left[
      K_{\bm{i}+\frac{1}{2}\bm{e}_\ell}\,
      \tfrac{(\psi + z)_{\bm{i}+\bm{e}_\ell}-(\psi + z)_{\bm{i}}}{h_\ell}
      - K_{\bm{i}-\frac{1}{2}\bm{e}_\ell}\,
      \tfrac{(\psi + z)_{\bm{i}}-(\psi + z)_{\bm{i}-\bm{e}_\ell}}{h_\ell}
    \right] = -S \left(\psi_{\bm{i}}\right).
  \end{equation}

  Let $N_{\rm dof}:=\prod_{\ell=1}^d N_\ell$. We flatten the grid in
  lexicographic order with the first index fastest as follows:
  \begin{equation}
    s(\boldsymbol{i}) \;=\; 1+\sum_{\ell=1}^d (i_\ell-1)\prod_{k=1}^{\ell-1}N_k.
  \end{equation}
  We define the flatted vector $\vec{\bm{\psi}}$ by
  $\vec{\bm{\psi}}_{s(\boldsymbol{i})}=\psi_{\boldsymbol{i}}$ and
  introduce the diffusion matrix
  $\mathbf{M}\left(\vec{\bm{\psi}}\right)$ corresponding to the
  discrete operator $- \nabla \cdot (K \nabla (\bullet))$, defined
  componentwise for any vector $\vec{\bm{\phi}} \in
  \mathbb{R}^{N_{\rm dof}}$, as follows
  \begin{equation}
    \left(\mathbf{M}\left(\vec{\bm{\psi}}\right) \cdot
    \vec{\bm{\phi}}\right)_{s(\bm{i})} = -\sum_{\ell=1}^d
    \tfrac{1}{h_\ell}\left[
      K_{\bm{i}+\frac{1}{2}\bm{e}_\ell}\,
      \tfrac{(\phi + z)_{\bm{i}+\bm{e}_\ell}-(\phi + z)_{\bm{i}}}{h_\ell}
      - K_{\bm{i}-\frac{1}{2}\bm{e}_\ell}\,
      \tfrac{(\phi + z)_{\bm{i}}-(\phi + z)_{\bm{i}-\bm{e}_\ell}}{h_\ell}
    \right].
  \end{equation}
  With this notation, the semi-discrete system recasts into  the
  following compact form
  \begin{equation}\label{eq:linear-system}
    \partial_t \theta(\vec{\bm{\psi}}) = - S
    \left(\vec{\bm{\psi}}\right) -
    \mathbf{M}\left(\vec{\bm{\psi}}\right) \cdot \left(\vec{\bm{\psi}}
    + \vec{\bm{z}}\right).
  \end{equation}

  We now turn to the temporal discretization. Let $T > 0$. We consider
  the uniform grid $0 = t_0 < t_1 < \cdots < t_{N_t} = T$ with step
  size $\tau = T / N_t$. Denote $\vec{\bm{\psi}}^n$ the approximation
  of $\vec{\bm{\psi}}(t_n)$. A first-order backward difference (BDF1)
  scheme for \eqref{eq:linear-system} reads
  \begin{equation} \label{eq:fully-bdf1}
    \tfrac{ \theta\left(\vec{\bm{\psi}}^{n+1}\right) -
    \theta\left(\vec{\bm{\psi}}^n\right) }{\tau} = -S
    \left(\vec{\bm{\psi}}^{n+1}\right) - \xi
    \left(\vec{\bm{\psi}}^{n+1} - \vec{\bm{\psi}}^{n}\right) -
    \mathbf{M}\left(\vec{\bm{\psi}}^{n}\right) \cdot
    \left(\vec{\bm{\psi}}^{n+1} + \vec{\bm{z}}\right).
  \end{equation}

  In \eqref{eq:fully-bdf1}, $\xi > 0$ is a
  stabilization parameter introduced to enhance the convergence of the
  nonlinear solver used below. While larger values of $\xi$ increase
  robustness, they also introduce extra damping temporally; so in
  practice, $\xi$ should be chosen as small as possible without
  compromising convergence. We will show below that for $\xi > 0$, the
  proposed iterative scheme converges provided the constitutive laws satisfy
  the following monotonicity assumptions:

  \begin{enumerate}[label=\textup{(A\arabic*)}, ref=\textup{(A\arabic*)}]
    \item \label{assump:A1} $\theta'(\psi)\ge c_0>0$;
    \item \label{assump:A2} $S'(\psi)\ge 0$ for
      $\psi\in[\psi_{\mathrm{start}},\psi_{\mathrm{opt}}]$,
  \end{enumerate}
  i.e., the specific moisture capacity $\theta^\prime(\psi)$ is
  positive and the root-water-uptake term increases with $\psi$ on the
  rising branch from the onset to the optimal take. \ref{assump:A1}
  yields strict parabolicity and strong monotonicity of the storage
  term, which underpins stability and uniqueness; \ref{assump:A2}
  provides a monotone sink, which contributes to contractivity of the
  nonlinear map. Outside interval $[\psi_{start}, \psi_{opt}]$ (e.g.,
  in the over-wet/anaerobic regime), $S^\prime(\psi)$ may change sign,
  and then a positive $\xi$ is used to compensate the nonlinearity
  induced by $\theta(\bullet)$ and $S(\bullet)$.

  \section{Efficient smoother and multigrid solver}\label{sec:smoother}

  In this section, we discibe the nonlinear multigrid solver for the
  fully discrete system \eqref{eq:fully-bdf1}. We begin with
  some notations. We denote the entires of  matrix
  $\mathbf{M}\left(\vec{\bm{\psi}}^n\right)$ by
  \begin{equation}\label{eq:m_general}
    \mathbf{M} =
    \begin{pmatrix}
      m_{1, 1}            & -m_{1, 2}           & \cdots & -m_{1,
      N_{\rm dof}}                                                   \\
      -m_{2, 1}           & m_{2, 2}            & \cdots & -m_{2,
      N_{\rm dof}}                                                   \\
      \vdots              & \vdots              &        & \vdots
      \\
      -m_{N_{\rm dof}, 1} & -m_{N_{\rm dof}, 2} & \cdots & m_{N_{\rm
      dof}, N_{\rm dof}}
    \end{pmatrix}.
  \end{equation}
  We have the following observation of the $\mathbf{M}$ obtained by the
  finite difference method in \eqref{eq:fully-bdf1}.
  \begin{prop}
    For the matrix $\mathbf{M}$ obtained by the finite difference
    method in \eqref{eq:fully-bdf1}, we have
    \begin{equation}
      m_{i, i} > 0, \quad m_{i, j} \geq 0,  \quad \sum\limits_{j =
      1}^{N_{\rm dof}} m_{i,j} = m_{i, i}.
    \end{equation}
  \end{prop}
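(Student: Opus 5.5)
The plan is to read off the entries of $\mathbf{M}=\mathbf{M}(\vec{\bm{\psi}}^n)$ directly from the stencil defining $\mathbf{M}\cdot\vec{\bm{\phi}}$, with the $z$-shift dropped since it is the affine part and not the matrix. The row-sum identity is read in the sign convention of \eqref{eq:m_general}. There the off-diagonal entries of $\mathbf{M}$ are $-m_{i,j}$, so the identity $\sum_{j}m_{i,j}=m_{i,i}$ means that the off-diagonal weights $m_{i,j}$, $j\neq i$, add up to the diagonal weight. Equivalently, every row of $\mathbf{M}$ sums to zero, i.e.
\begin{equation*}
\mathbf{M}\,\vec{\bm{1}}=\vec{\bm{0}} .
\end{equation*}
This is the form I will prove. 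It expresses that the discrete operator $-\nabla\cdot(K\nabla\,\bullet)$ annihilates constants.

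\textbf{Step 1 (entries).} Fix a row $s=s(\bm{i})$. Expanding the stencil, the coefficient of $\phi_{\bm{i}\pm\bm{e}_\ell}$ in $(\mathbf{M}\vec{\bm{\phi}})_{s}$ is $-K_{\bm{i}\pm\frac12\bm{e}_\ell}/h_\ell^2$. This gives
\begin{equation*}
m_{s,s(\bm{i}\pm\bm{e}_\ell)}=K_{\bm{i}\pm\frac12\bm{e}_\ell}/h_\ell^2 .
\end{equation*}
The coefficient of $\phi_{\bm{i}}$ is
\begin{equation*}
m_{s,s}=\sum_{\ell=1}^d\bigl(K_{\bm{i}+\frac12\bm{e}_\ell}+K_{\bm{i}-\frac12\bm{e}_\ell}\bigr)/h_\ell^2 .
\end{equation*}
All other $m_{s,j}$ vanish.

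The sums here run only over the faces on which $K_{\bm{i}\pm\frac12\bm{e}_\ell}$ is defined. By the definition of face averages, these are the interior faces $1\le i_\ell\le N_\ell-1$. At a node adjacent to $\partial\Omega$, the missing face therefore contributes neither to the diagonal nor to an off-diagonal entry. The Dirichlet data enter only through the right-hand side, and for homogeneous data they contribute nothing.

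\textbf{Step 2 (signs).} The constitutive laws give $K>0$; all models in Table~\ref{tab:models} have $K\ge K_{\min}>0$ on bounded ranges of $\psi$. Hence every face average is positive, and $m_{s,j}\ge 0$ for $j\ne s$. For $m_{s,s}>0$ I need every node to possess at least one interior face. This holds as soon as some $N_\ell\ge2$; the degenerate case of a single unknown is excluded, as in the paper's setting.

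\textbf{Step 3 (row sum).} Summing the off-diagonal entries from Step 1 reproduces exactly the face-by-face expression for $m_{s,s}$. Each face conductance $K_{\bm{i}\pm\frac12\bm{e}_\ell}/h_\ell^2$ appears once on the diagonal and once off the diagonal, and this gives the identity.

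\textbf{Main obstacle.} The only delicate point is the boundary rows. If one instead imposed the Dirichlet condition through a ghost value and kept a boundary-face conductance on the diagonal, those rows would only be weakly diagonally dominant, with strict inequality. The argument therefore has to rest on the paper's definition of $K_{\bm{i}+\frac12\bm{e}_\ell}$ for interior faces only, so that no term without an off-diagonal partner is ever placed on the diagonal. I will state this explicitly when reading off the entries.
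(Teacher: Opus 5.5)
Your interior computation (Steps 1--3) is the right one. The paper states the proposition as an observation without proof, so this computation is all it implicitly relies on. You are also right that the row-sum identity must be read as $\sum_{j\neq i}m_{i,j}=m_{i,i}$: with $j=i$ included, it would force every off-diagonal weight to vanish.

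\textbf{The gap is in the boundary rows.} The paper imposes homogeneous Dirichlet conditions, and the stencil for $(\mathbf{M}\vec{\bm{\phi}})_{s(\bm{i})}$ is written for all $\bm{i}\in\mathcal{I}$. At a node with $i_\ell=N_\ell$ it contains the boundary-face term $-K_{\bm{i}+\frac12\bm{e}_\ell}(\phi_{\bm{i}+\bm{e}_\ell}-\phi_{\bm{i}})/h_\ell^2$, where $\phi_{\bm{i}+\bm{e}_\ell}$ is the prescribed boundary value. Homogeneous data remove only the piece $-K_{\bm{i}+\frac12\bm{e}_\ell}\phi_{\bm{i}+\bm{e}_\ell}/h_\ell^2$, which would go to the right-hand side. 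The piece $K_{\bm{i}+\frac12\bm{e}_\ell}\phi_{\bm{i}}/h_\ell^2$ stays on the diagonal and has no off-diagonal partner.

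So your sentence that homogeneous Dirichlet data ``contribute nothing'' is false. The index range $1\le i_\ell\le N_\ell-1$ in the face-average formula only says how interior faces are averaged; it does not license deleting boundary faces. Deleting them imposes a zero-flux condition instead of $\psi=0$. Your own conclusion $\mathbf{M}\vec{\bm{1}}=\vec{\bm{0}}$ signals this, since it makes $\mathbf{M}$ singular, i.e.\ the pure-Neumann operator.

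For the Dirichlet matrix the correct bookkeeping gives
\begin{equation*}
m_{i,i}-\sum_{j\neq i}m_{i,j}=\sum_{\text{boundary faces of node } i}\frac{K_{\rm face}}{h_\ell^2}\;\ge\;0 .
\end{equation*}
A ghost-cell treatment only changes the positive constant. Equality holds exactly in rows whose stencil does not touch $\partial\Omega$, and strict inequality holds in the others. Those rows are strictly diagonally dominant, which is what makes $\mathbf{M}$ a nonsingular M-matrix; it is not an obstacle.

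Hence the equality in the proposition cannot be proved as stated for the matrix in \eqref{eq:fully-bdf1}. What you can and should prove is $m_{i,i}>0$, $m_{i,j}\ge 0$ and $\sum_{j\neq i}m_{i,j}\le m_{i,i}$, with equality in interior rows. This follows from your Steps 1--3 plus the boundary-face bookkeeping above. That inequality is exactly what the proof of Lemma~\ref{lem:decompos} uses (the step $m_{pp}-\sum_{j<p}m_{pj}\ge\sum_{j>p}m_{pj}$), so nothing downstream is lost. If you want equality in every row, you must state explicitly that you are treating a no-flux boundary, which contradicts the paper's setup.
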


  We define the discrete $L^\infty$ norm for a vector and the induced
  matrix norm
  \begin{equation}\label{eq:def-l-infty-norm}
    \left\|\vec{\bm{ \psi }} \right\|_\infty = \max\limits_{1 \leq i
    \leq N_{\rm dof}} |\psi_i|, \quad \left\|\mathbf{M}\right\|_\infty
    = \max\limits_{\vec{\bm{\psi}} \in \mathbb{R}^{N_{\rm dof}}}
    \tfrac{\left\|\mathbf{M}\vec{\bm{\psi}}\right\|_\infty}{\left\|\vec{\bm{\psi}}\right\|_\infty}
    = \max\limits_{1 \leq i \leq N_{\rm dof}} \left\{ \sum\limits_{j =
    1}^{N_{\rm dof}} |m_{ij}| \right\}.
  \end{equation}
  We also denote the extremal row sums by
  \begin{equation}
    \begin{aligned}
      & \overline{\lambda}_\infty(\mathbf{M}) = \max\limits_{1 \leq i
      \leq N_{\rm dof}} \left\{ \sum\limits_{j = 1}^{N_{\rm dof}}
      |m_{ij}| \right\}, \quad
      \underline{\lambda}_\infty(\mathbf{M}) = \min\limits_{1 \leq i
      \leq N_{\rm dof}} \left\{ \sum\limits_{j = 1}^{N_{\rm dof}}
      |m_{ij}| \right\}.                                               \\
    \end{aligned}
  \end{equation}

  We introduce the following useful Lemma \ref{lem:decompos}
  \begin{lem} \label{lem:decompos}
    We decompose the matrix $\mathbf{M}$ into its strictly lower,
    diagonal, and strictly upper triangular parts as
    \begin{equation}\label{eq:decompos-01}
      \mathbf{M} = \mathbf{L} + \mathbf{D} + \mathbf{U},
    \end{equation}
    To enhance diagonal dominance, we introduce a diagonal shift
    \begin{equation}
      \bm{\Lambda}_\kappa = {\rm diag}\{\kappa_1, \cdots,
      \kappa_{N_{\rm dof}}\},
    \end{equation}
    and define the modified splitting
    \begin{equation}
      \mathbf{M} = \mathbf{L} + (\mathbf{D} + \bm{\Lambda}_\kappa) +
      (\mathbf{U} - \bm{\Lambda}_\kappa) =: \mathbf{L} +
      \mathbf{D}_\kappa + \mathbf{U}_\kappa.
    \end{equation}
    We further introduce another diagonal matrix $\bm{\Lambda}_\mu = \{
    \mu_1, \cdots, \mu_{N_{\rm dof}} \}$. For
    $\kappa_i > 0$, $\mu_i > 0$, $i = 1, \cdots, N_{\rm dof}$ and constants
    $\alpha, \beta > 0$, the following bounds hold:
    \begin{itemize}
      \item $\| \alpha \mathbf{I} - \beta \mathbf{U}_\kappa \|_\infty =
        \alpha + \beta \overline{\lambda}_\infty (\mathbf{U}_\kappa)$.
      \item $\left\| \left(\alpha \bm{\Lambda}_\mu + \beta (\mathbf{L}
        + \mathbf{D}_\kappa)\right)^{-1} \right\|_\infty \leq
        \tfrac{1}{\alpha \underline{\lambda}_\infty(\bm{\Lambda}_\mu) +
        \beta \underline{\lambda}_\infty (\mathbf{U}_\kappa)}$.
    \end{itemize}
  \end{lem}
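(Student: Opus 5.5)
The plan is to compute both matrices entrywise from the sign pattern in \eqref{eq:m_general} and the row-sum property in the preceding Proposition, which we read as $m_{i,i}\ge\sum_{j\neq i} m_{i,j}$. Equality holds at interior rows, and strict inequality can occur at rows touching the Dirichlet boundary; only the inequality is needed.

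\emph{First bullet.} By construction $\mathbf{U}_\kappa=\mathbf{U}-\bm{\Lambda}_\kappa$. Its diagonal entries are $-\kappa_i$, its strictly upper entries are $-m_{i,j}$ ($j>i$), and all other entries vanish. Its $i$-th absolute row sum is therefore
\[
r_i:=\kappa_i+\sum_{j>i} m_{i,j},
\]
and $\overline{\lambda}_\infty(\mathbf{U}_\kappa)=\max_i r_i$. The matrix $\alpha\mathbf{I}-\beta\mathbf{U}_\kappa$ has diagonal $\alpha+\beta\kappa_i>0$ and off-diagonal entries $\beta m_{i,j}\ge0$. No cancellation occurs in absolute values, so its $i$-th absolute row sum is exactly $\alpha+\beta r_i$. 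Taking the maximum over $i$ and using \eqref{eq:def-l-infty-norm} gives the identity.

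\emph{Second bullet.} Set $\mathbf{A}:=\alpha\bm{\Lambda}_\mu+\beta(\mathbf{L}+\mathbf{D}_\kappa)$. This is lower triangular, with diagonal $a_{ii}=\alpha\mu_i+\beta(m_{i,i}+\kappa_i)>0$ and off-diagonal entries $a_{ij}=-\beta m_{i,j}$ for $j<i$.

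I will use the standard estimate for row diagonally dominant matrices. If $\delta_i:=|a_{ii}|-\sum_{j\ne i}|a_{ij}|>0$ for all $i$, then $\mathbf{A}$ is invertible and $\|\mathbf{A}^{-1}\|_\infty\le 1/\min_i\delta_i$. The proof is short:
\begin{itemize}
\item let $\vec{\bm y}=\mathbf{A}^{-1}\vec{\bm x}$ and choose $i$ with $|y_i|=\|\vec{\bm y}\|_\infty$;
\item then $|x_i|\ge |a_{ii}||y_i|-\sum_{j\ne i}|a_{ij}||y_j|\ge \delta_i\|\vec{\bm y}\|_\infty$;
\item hence $\|\vec{\bm x}\|_\infty\ge(\min_k\delta_k)\|\vec{\bm y}\|_\infty$.
\end{itemize}
Invertibility is in any case immediate here, since $\mathbf{A}$ is triangular with positive diagonal.

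The key computation is the margin $\delta_i$. The row-sum property gives $m_{i,i}-\sum_{j<i}m_{i,j}\ge\sum_{j>i}m_{i,j}$, so
\[
\delta_i=\alpha\mu_i+\beta\Big(\kappa_i+m_{i,i}-\sum_{j<i}m_{i,j}\Big)\ge \alpha\mu_i+\beta r_i .
\]
Using $\min_i(\alpha\mu_i+\beta r_i)\ge \alpha\min_i\mu_i+\beta\min_i r_i=\alpha\underline{\lambda}_\infty(\bm{\Lambda}_\mu)+\beta\underline{\lambda}_\infty(\mathbf{U}_\kappa)$ then yields the claimed bound.

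The main point requiring care is this margin step. It converts the part of the diagonal mass $m_{i,i}$ not consumed by the lower-triangular entries into the upper-triangular row sum $\sum_{j>i}m_{i,j}$. That conversion is exactly where the zero-row-sum (M-matrix) structure of the finite-difference operator enters, and why the bound is expressed through $\underline{\lambda}_\infty(\mathbf{U}_\kappa)$ rather than through $\mathbf{D}$.
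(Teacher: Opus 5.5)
Your proposal is correct and follows essentially the same route as the paper. Both arguments pick an index $p$ with $|\psi_p|=\|\vec{\bm{\psi}}\|_\infty$, bound the $p$-th component of $\big(\alpha\bm{\Lambda}_\mu+\beta(\mathbf{L}+\mathbf{D}_\kappa)\big)\vec{\bm{\psi}}$ from below, and use $m_{p,p}-\sum_{j<p}m_{p,j}\ge\sum_{j>p}m_{p,j}$ to turn the diagonal surplus into the row sum of $\mathbf{U}_\kappa$.

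The differences are cosmetic. You phrase the step as the general diagonal-dominance bound $\|\mathbf{A}^{-1}\|_\infty\le 1/\min_i\delta_i$ and work with absolute values, which avoids the paper's split into the cases $\psi_p\ge0$ and $\psi_p\le0$. Your reading of the Proposition's row-sum identity as the inequality $m_{i,i}\ge\sum_{j\ne i}m_{i,j}$ is exactly what the paper's proof actually uses.
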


  \begin{proof}
    The first estimate follows immediately from the definition of
    discrete $L^\infty$ norm \eqref{eq:def-l-infty-norm}. We prove the
    second bound. Consider the linear system
    \begin{equation}
      \Big(\alpha \bm{\Lambda}_\mu + \beta (\mathbf{L} +
      \mathbf{D}_\kappa)\Big) \vec{\bm{\psi}} = \vec{\bm{f}}.
    \end{equation}

    The coefficient matrix is lower triangular with strictly positive
    diagonal entries $\alpha\mu_i+\beta(d_{ii}+\kappa_i)$, hence it is
    non-singular.  We only need to show
    \begin{equation}
      \left\|\vec{\bm{\psi}}\right\|_\infty \leq \tfrac{1}{\alpha
        \underline{\lambda}_\infty(\bm{\Lambda}_\mu) + \beta
      \underline{\lambda}_\infty (\mathbf{U}_\kappa)}
      \left\|\vec{\bm{f}}\right\|_\infty,
    \end{equation}
    which is equivalent to
    \begin{equation}
      \left\| \left(\alpha \bm{\Lambda}_\mu + \beta (\mathbf{L} +
      \mathbf{D}_\kappa)\right) \vec{\bm{\psi}} \right\|_\infty \geq
      \left(a \underline{\lambda}_\infty (\bm{\Lambda}_\mu) + \beta
      \underline{\lambda}_\infty \left(\mathbf{U}_\kappa\right) \right)
      \left\| \vec{\bm{\psi}} \right\|_\infty.
    \end{equation}
    Suppose $\left|\psi_p\right| =
    \left\|\vec{\bm{\psi}}\right\|_\infty$, if $\psi_p \geq 0$, we have
    \begin{equation}
      \begin{aligned}
        & \left\| \left(\alpha \bm{\Lambda}_\mu + \beta (\mathbf{L} +
        \mathbf{D}_\kappa) \right) \vec{\bm{\psi}} \right\|_\infty
        \geq (\alpha \mu_p + \beta(\kappa_p + m_{pp}) ) \psi_p - \beta
        \sum\limits_{j = 1}^{p-1} m_{pj} \psi_j
        \\
        & \quad \geq (\alpha \mu_p + \beta(\kappa_p + m_{pp}) ) \psi_p
        - \beta \sum\limits_{j = 1}^{p-1} m_{pj} \psi_p
        \geq \left(\alpha \mu_p + \beta \kappa_p + \beta
        \sum\limits_{j=p+1}^{N_{\rm dof}} m_{pj} \right) \psi_p         \\
        & \quad \geq (\alpha \underline{\lambda}_\infty
          (\bm{\Lambda}_\mu) + \beta
        \underline{\lambda}_\infty(\mathbf{U}_\kappa) )
        \left\|\vec{\bm{\psi}}\right\|_\infty.
      \end{aligned}
    \end{equation}
    A similar procedure yields the result for the case $\psi_p \leq 0$.
    The proof is thus completed.
  \end{proof}

  \subsection{Nonlinear Gauss-Seidel smoother and convergence}
  Given $\vec{\bm{\psi}}^n$, the nonlinear Gauss-Seidel (NGS) smoother
  for solving \eqref{eq:fully-bdf1} reads:
  \begin{itemize}
    \item Set $\vec{\bm{\psi}}^{(0)} = \vec{\bm{\psi}}^n$.
    \item For $s = 1, 2, \ldots$, compute $\vec{\bm{\psi}}^{(s+1)}$
      iteratively from
      \begin{equation}\label{eq:iterative}
        \begin{aligned}
          \tfrac{ \theta\left(\vec{\bm{\psi}}^{(s+1)}\right) -
          \theta\left(\vec{\bm{\psi}}^n\right) }{\tau} & = -S
          \left(\vec{\bm{\psi}}^{(s+1)}\right) -
          \xi\left(\vec{\bm{\psi}}^{(s+1)} - \vec{\bm{\psi}^n}\right) -
          \mathbf{U}_\kappa \left(\vec{\bm{\psi}}^n\right)  \cdot
          \left(\vec{\bm{\psi}}^{(s)} + \vec{\bm{z}}\right)
          \\
          & - \left[\mathbf{L}\left(\vec{\bm{\psi}}^n\right)+
          \mathbf{D}_{\kappa}\left(\vec{\bm{\psi}}^n\right) \right]\cdot
          \left(\vec{\bm{\psi}}^{(s+1)} + \vec{\bm{z}}\right),
        \end{aligned}
      \end{equation}
      ant stop when the nonlinear residual
      \begin{equation}
        \vec{\bm{r}}^{(s+1)} = - S\left(\vec{\bm{\psi}}^{(s+1)}\right)
        - \xi \left(\vec{\bm{\psi}}^{(s+1)} - \vec{\bm{\psi}}^n\right)
        - \mathbf{M}\left(\vec{\bm{\psi}}^n\right) \cdot
        \left(\vec{\bm{\psi}}^{(s+1)} + \vec{\bm{z}}\right) -
        \tfrac{\theta\left(\vec{\bm{\psi}}^{(s+1)}\right) -
        \vec{\bm{\psi}}^n}{\tau}.
      \end{equation}
      satisfies the stop criteria
      $\left\|\vec{\bm{r}}^{(s+1)}\right\|_\infty < \textit{tol}$.
  \end{itemize}
  Although \eqref{eq:iterative} is fully implicit, the triangular
  splitting yields a pointwise implementation:
  in a forward sweep, the $i$-th update involves a single scalar
  unknown $\psi_i^{(s+1)}$, leading to the following scalar nonlinear equation:
  \begin{equation}\label{eq:point-update}
    \tfrac{\theta\left(\psi_i^{(s+1)}\right)}{\tau} + S(\psi_i^{(s+1)})
    + (\xi + \kappa_i + m_{ii}) \psi_i^{(s+1)} = f_i,
  \end{equation}
  with the known right-hand side
  \begin{equation}
    f_i  = \tfrac{\theta\left(\psi_i^n\right)}{\tau} + \xi \psi_i^n +
    \sum\limits_{j = 1}^{i-1} m_{ij} \psi_j^{(s+1)} + \sum\limits_{j =
    i+1}^{N_{\rm dof}} m_{ij} \psi_j^{(s)} + \kappa_i \psi_i^{(s)}.
  \end{equation}
  Under Assumptions \ref{assump:A1}-\ref{assump:A2}, the left-hand side
  of \eqref{eq:point-update} is strictly increasing in $\psi_i^{(s+1)}$,
  so each point update is uniquely defined and can be obtained by a few
  (damped) Newton steps with optional backtracking.
  Intuitively, the shift $\kappa_i$ weakens the coupling carried by
  $\mathbf{U}^n$ and increases the effective diagonal, which improves
  the smoothing factor for high-frequency error modes.

  \begin{figure}[H]
    \centering
    \begin{tikzpicture}[scale=0.55,inner sep=0pt,minimum size=2mm,thick]

      \draw[dotted] (0,0) -- (12,0);
      \node (A) at (0,0) [circle,draw=green!50,fill=green!20] {};
      \node (B) at (4,0) [circle,draw=green!50,fill=green!20] {};
      \node (C) at (6,0) [circle,draw=orange!50,fill=orange!20] {};
      \node (D) at (8,0) [circle,draw=red!50,fill=red!20] {};
      \node (E) at (12,0) [circle,draw=red!50,fill=red!20] {};

      \path[->,dotted] (A) edge (B);
      \path[->,solid] (B) edge (C);
      \path[->,solid] (C) edge (D);
      \path[->,dotted] (D) edge (E);

      \node[below=4pt, font=\tiny] at (A) {$0$};
      \node[below=4pt, font=\tiny] at (B) {$i-1$};
      \node[below=4pt, font=\tiny] at (C) {$i$};
      \node[below=4pt, font=\tiny] at (D) {$i+1$};
      \node[below=4pt, font=\tiny] at (E) {$N$};

    \end{tikzpicture}
    \caption{Pointwise sweep order of the NGS smoother (1D).
      Green nodes have been updated using the newest values (forward
      Gauss-Seidel);
      the orange node is currently solved from~\eqref{eq:point-update};
    red nodes are pending updates.}
  \end{figure}

  \begin{thm}
    Assume $\left\| \vec{\bm{\psi}}^n \right\|_\infty < \infty$ and that
    $\theta(\bullet)$ and $S(\bullet)$ satisfy Assumptions
    \ref{assump:A1}-\ref{assump:A2}. Then, the nonlinear smoother
    \eqref{eq:iterative} defines a contraction in the discrete
    $L^\infty$ norm provided
    \begin{equation}\label{eq:iterative-conv}
      \xi \geq \overline{\lambda}_\infty (\mathbf{U}_\kappa) -
      \underline{\lambda}_\infty (\mathbf{U}_\kappa).
    \end{equation}
    Otherwise, there exists a sufficient large constant $\xi_\star$ such
    that for any $\xi > \xi_\star$, iteration \eqref{eq:iterative}
    is a contraction.
  \end{thm}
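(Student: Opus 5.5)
Take two successive iterates and subtract the corresponding instances of \eqref{eq:iterative}. Set $\vec{\bm{e}}^{(s+1)} = \vec{\bm{\psi}}^{(s+1)} - \vec{\bm{\psi}}^{(s)}$; the terms in $\vec{\bm{\psi}}^n$ and $\vec{\bm{z}}$ cancel. By the mean value theorem, the differences of $\theta$ and $S$ can be written with diagonal matrices $\bm{\Theta} = {\rm diag}\{\theta'(\eta_i)\}$ and $\bm{\Sigma} = {\rm diag}\{S'(\zeta_i)\}$, where each $\eta_i, \zeta_i$ lies between $\psi_i^{(s+1)}$ and $\psi_i^{(s)}$. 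This yields the linear error recursion
\begin{equation*}
\Big(\tfrac{1}{\tau}\bm{\Theta} + \bm{\Sigma} + \xi \mathbf{I} + \mathbf{L} + \mathbf{D}_\kappa\Big)\vec{\bm{e}}^{(s+1)} = -\mathbf{U}_\kappa \vec{\bm{e}}^{(s)}.
\end{equation*}
By \ref{assump:A1}–\ref{assump:A2}, $\bm{\Lambda}_\mu := \tfrac{1}{\tau}\bm{\Theta} + \bm{\Sigma} + \xi\mathbf{I}$ is diagonal with entries $\mu_i \ge c_0/\tau + \xi > 0$.

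Apply the second bound of Lemma~\ref{lem:decompos} with $\alpha = \beta = 1$. This gives
\begin{equation*}
\|\vec{\bm{e}}^{(s+1)}\|_\infty \le \frac{\|\mathbf{U}_\kappa\|_\infty}{\underline{\lambda}_\infty(\bm{\Lambda}_\mu) + \underline{\lambda}_\infty(\mathbf{U}_\kappa)}\|\vec{\bm{e}}^{(s)}\|_\infty \le \frac{\overline{\lambda}_\infty(\mathbf{U}_\kappa)}{c_0/\tau + \xi + \underline{\lambda}_\infty(\mathbf{U}_\kappa)}\|\vec{\bm{e}}^{(s)}\|_\infty .
\end{equation*}
Under \eqref{eq:iterative-conv} the denominator is at least $c_0/\tau + \overline{\lambda}_\infty(\mathbf{U}_\kappa)$. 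The contraction factor is therefore at most $\overline{\lambda}_\infty/(c_0/\tau+\overline{\lambda}_\infty) < 1$, uniformly in $s$, and the contraction follows from the strict positivity $c_0>0$. In the ``otherwise'' case, one chooses $\xi_\star$ so that $c_0/\tau + \xi_\star + \underline{\lambda}_\infty > \overline{\lambda}_\infty$; for example $\xi_\star = \overline{\lambda}_\infty - \underline{\lambda}_\infty$ already suffices.

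\textbf{Main obstacle.} \ref{assump:A2} only gives $S'\ge0$ on $[\psi_{\rm start},\psi_{\rm opt}]$, so $\bm{\Sigma}$ may have negative entries when the iterates leave that interval. To handle this, I will first establish a priori that all iterates stay in a bounded set. This should follow from the finiteness of $\|\vec{\bm{\psi}}^n\|_\infty$ and a maximum-principle-type bound on the scalar update \eqref{eq:point-update}. On that set $S'$ is bounded below, say by $-C_S$. Then $\mu_i \ge c_0/\tau - C_S + \xi$, and the required $\xi_\star$ is enlarged by $C_S$; this is the genuine source of the ``sufficiently large $\xi_\star$'' clause. I also need the Lemma~\ref{lem:decompos} bound for $\mathbf{U}_\kappa$ to use row sums of absolute values, including $|-\kappa_i|$. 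This holds because $\overline{\lambda}_\infty$ is defined with absolute values, and the lemma's proof produces exactly $\kappa_p + \sum_{j>p} m_{pj}$, which equals the $p$-th row sum of $|\mathbf{U}_\kappa|$.
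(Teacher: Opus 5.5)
This is essentially the paper's proof. It uses the same mean-value linearization to $\bigl(\tfrac{1}{\tau}\bm{\Theta}+\bm{\Sigma}+\xi\mathbf{I}+\mathbf{L}+\mathbf{D}_\kappa\bigr)\vec{\bm{e}}^{(s+1)}=-\mathbf{U}_\kappa\vec{\bm{e}}^{(s)}$, then the second bound of Lemma~\ref{lem:decompos}, and reaches the same factor $\overline{\lambda}_\infty(\mathbf{U}_\kappa)/\bigl(c_0/\tau+\xi+\underline{\lambda}_\infty(\mathbf{U}_\kappa)\bigr)$. The only difference is that you compare successive iterates, whereas the paper uses $\vec{\bm{e}}^{(s)}=\vec{\bm{\psi}}^\star-\vec{\bm{\psi}}^{(s)}$ for an assumed solution $\vec{\bm{\psi}}^\star$; your variant also gives existence of $\vec{\bm{\psi}}^\star$ through the Banach fixed-point theorem.

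The paper never addresses the sign of $S'$ outside $[\psi_{\rm start},\psi_{\rm opt}]$; it simply writes an unspecified lower bound $S_0$, so your a priori bound goes beyond it. That bound does close: the box $\|\vec{\bm{\psi}}\|_\infty\le\|\vec{\bm{\psi}}^n\|_\infty+\tau\bigl(\|\mathbf{M}\vec{\bm{z}}\|_\infty+\sup|S|\bigr)/c_0$ is invariant under each sweep. However, that step needs $S$ to be bounded, which is not part of \ref{assump:A1}--\ref{assump:A2}, so you should state it as an extra hypothesis.
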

  \begin{rmk}
    We remark that we can choose the stabilizations parameters, $\xi$ and
    $\bm{\Lambda}_\kappa$,flexibly to balance the convergence speed and
    stability of the proposed iterative schemes. Especially when
    Assumptions.~\ref{assump:A1}-\ref{assump:A2} holds. In this case,
    the first choice is
    \begin{equation}
      \bm{\Lambda}_\kappa = {\rm diag}
      \left\{\overline{\lambda}_\infty(\mathbf{U}) -
        \sum\limits_{j=2}^{N_{\rm dof}}  m_{1,j},
        \overline{\lambda}_{\infty} (\mathbf{U}) - \sum\limits_{j =
        3}^{N_{\rm dof}} m_{2,j}, \cdots, \overline{\lambda}_\infty
        (\mathbf{U}) - m_{N_{\rm dof}-1, N_{\rm dof}},
      \overline{\lambda}_\infty (\mathbf{U}) \right\}.
    \end{equation}
    Then, $\overline{\lambda}_\infty(\mathbf{U}) =
    \underline{\lambda}_\infty(\mathbf{U})$, and condition
    \eqref{eq:iterative-conv} reduces to $\xi \geq 0$, therefore
    iteration \eqref{eq:iterative} is a contraction for any $\xi \geq
    0$ under Assumptions \ref{assump:A1}-\ref{assump:A2}.

    Another choose is to set $\xi = 0$; then, one can let  the time step
    to be sufficiently small to guarantee the convergence of the iteration.
  \end{rmk}
  \begin{proof}
    We recall the iteration in \eqref{eq:iterative}
    \begin{equation}
      \begin{aligned}
        \tfrac{ \theta\left(\vec{\bm{\psi}}^{(s+1)}\right) -
        \theta\left(\vec{\bm{\psi}}^n\right) }{\tau} & = -S
        \left(\vec{\bm{\psi}}^{(s+1)}\right) -
        \xi\left(\vec{\bm{\psi}}^{(s+1)} - \vec{\bm{\psi}}^n\right) -
        \mathbf{U}_\kappa\left(\vec{\bm{\psi}}^n\right)  \cdot
        \left(\vec{\bm{\psi}}^{(s)} + \vec{\bm{z}}\right)
        \\
        & - \left[\mathbf{L}\left(\vec{\bm{\psi}}^n\right)+
        \mathbf{D}_\kappa \left(\vec{\bm{\psi}}^n\right) \right]\cdot
        \left(\vec{\bm{\psi}}^{(s+1)} + \vec{\bm{z}}\right).
      \end{aligned}
    \end{equation}
    The solution of the nonlinear system satisfies
    \begin{equation}
      \tfrac{ \theta\left(\vec{\bm{\psi}}^{\star}\right) -
      \theta\left(\vec{\bm{\psi}}^n\right) }{\tau} = -S
      \left(\vec{\bm{\psi}}^{\star}\right) - \xi \left(
      \vec{\bm{\psi}}^\star - \vec{\bm{\psi}}^n \right) -
      \mathbf{M}\left(\vec{\bm{\psi}}^n\right) \cdot
      \left(\vec{\bm{\psi}}^{\star} + \vec{\bm{z}}\right).
    \end{equation}
    Subtracting the above two equations and involking $\mathbf{M} =
    \mathbf{L} + \mathbf{D}_\kappa\left(\vec{\bm{\psi}}^n\right) +
    \mathbf{U}_\kappa\left(\vec{\bm{\psi}^n}\right)$ yields the error equation
    \begin{equation}
      \begin{aligned}
        &
        \tfrac{1}{\tau}\left[\theta\left(\vec{\bm{\psi}}^{(s+1)}\right)
        - \theta\left(\vec{\bm{\psi}}^\star\right)\right] +
        S\left(\vec{\bm{\psi}}^{s+1}\right) -
        S\left(\vec{\bm{\psi}}^\star\right) + \xi \vec{\bm{e}}^{(s+1)} \\
        & =  - \left[ \mathbf{L}\left(\vec{\bm{\psi}}^n\right) +
        \mathbf{D}_\kappa \left( \vec{\bm{\psi}}^n \right) \right]
        \cdot \vec{\bm{e}}^{(s+1)} -
        \mathbf{U}_\kappa\left(\vec{\bm{\psi}}^n\right) \cdot
        \vec{\bm{e}}^{(s)},                                             \\
      \end{aligned}
    \end{equation}
    where $\vec{\bm{e}}^{(s)} = \vec{\bm{\psi}}^\star -
    \vec{\bm{\psi}}^{(s)}$. Utilizing the mean value theorem and making
    some arrangements, we have
    \begin{equation}
      \left(\bm{\Lambda}(\tau, \theta, S) + \xi \mathbf{I} +
        \mathbf{L}\left(\vec{\bm{\psi}}^n\right)+
      \mathbf{D}_\kappa\left(\vec{\bm{\psi}}^n\right)\right)
      \vec{\bm{e}}^{(s+1)} =  -
      \mathbf{U}_\kappa\left(\vec{\bm{\psi}}^n\right) \vec{\bm{e}}^{(s)}.
    \end{equation}
    Here,
    \begin{equation}
      \bm{\Lambda}(\tau , \theta, S) = {\rm diag} \{ \lambda_1(\tau,
      \theta, S), \cdots, \lambda_{N_{\rm dof}}(\tau, \theta, S) \},
    \end{equation}
    with
    \begin{equation}
      \lambda_i(\tau, \theta, S) =
      \tfrac{1}{\tau}\theta^\prime\left(\tilde{\psi}_i\right) +
      S^\prime\left(\bar{\psi}_i\right).
    \end{equation}
    Therefore, $\underline{\lambda}_\infty(\bm{\Lambda}(\tau, \theta,
    S)) \geq \tfrac{1}{\tau}\theta_0 + S_0$. According to
    Lemma~\ref{lem:decompos}, one has
    \begin{equation}
      \left\| \vec{\bm{e}}^{(s+1)} \right\|_\infty \leq
      \tfrac{\overline{\lambda}_\infty(\mathbf{U}_\kappa)}{\tfrac{1}{\tau}
        \theta_0 + S_0 + \xi +
      \underline{\lambda}_\infty\left(\mathbf{U}_\kappa\right) }
      \left\| \vec{\bm{e}}^{(s)} \right\|_\infty.
    \end{equation}
    The proof is thus completed.
  \end{proof}

  \subsection{Nonlinear multigrid cycle}

  \begin{figure}[H]
    \begin{tikzpicture}[scale=1]

      \begin{scope}[xshift=0cm]

        \draw[very thick] (0,0) rectangle (4,4);

        \foreach \i in {1,2,3}{
          \draw[thick] (\i,0) -- (\i,4);
          \draw[thick] (0,\i) -- (4,\i);
        }

        \foreach \i in {0.5,1.5,2.5,3.5}{
          \foreach \j in {0.5,1.5,2.5,3.5}{
            \node[blue] at (\i,\j) {$\triangle$};
          }
        }

        \node at (2,-0.7) {44 fine};

      \end{scope}

      \begin{scope}[xshift=6cm]

        \draw[very thick] (0,0) rectangle (4,4);

        \foreach \i in {1,2,3}{
          \draw[dashed] (\i,0) -- (\i,4);
          \draw[dashed] (0,\i) -- (4,\i);
        }

        \draw[thick] (2,0) -- (2,4);
        \draw[thick] (0,2) -- (4,2);

        \foreach \i in {1,3}{
          \foreach \j in {1,3}{
            \node[red,circle,draw,fill=red!20,minimum size=8pt] at (\i,\j) {};
          }
        }

        \node at (2,-0.7) {2×2 coarse};

      \end{scope}

      \begin{scope}[xshift=12cm]

        \draw[very thick] (0,0) rectangle (4,4);

        \foreach \i in {1,2,3}{
          \draw[dashed] (\i,0) -- (\i,4);
          \draw[dashed] (0,\i) -- (4,\i);
        }

        \foreach \i in {0.5,1.5,2.5,3.5}{
          \foreach \j in {0.5,1.5,2.5,3.5}{
            \node[blue] at (\i,\j) {$\triangle$};
          }
        }

        \draw[dashed,thick] (2,0) -- (2,4);
        \draw[dashed,thick] (0,2) -- (4,2);

        \foreach \i in {1,3}{
          \foreach \j in {1,3}{
            \node[red,circle,draw,fill=red!20,minimum size=8pt] at (\i,\j) {};
          }
        }

        \node at (2,-0.7) {fine coarse};

      \end{scope}
    \end{tikzpicture}
    \caption{Two-grid hierarchy for the NGS--FAS scheme.
      Left: $4\times4$ fine grid $\Omega_h$ with cell-centered unknowns
      (blue triangles). Middle: $2\times2$ coarse grid $\Omega_H$ (red
      circles) overlaid on the underlying fine grid. Right: combined view
      of fine and coarse grid points, used to define restriction $I_H^h$
    and prolongation $I_h^H$ in the two-grid FAS algorithm.}
    \label{fig:two-grid}
  \end{figure}

  We now embed the NGS iteration in
  \eqref{eq:iterative} into a two-grid full-approximation scheme (FAS)
  for the fully discrete Richards system defined in \eqref{eq:fully-bdf1}.
  Figure~\ref{fig:two-grid} illustrates the two-grid hierarchy used in
  the NGS-FAS algorithm. On the finest level, we consider a $4\times4$
  partition of the domain (left panel), where the blue triangles
  represent the cell-centered unknowns $\psi_{h}$ on the fine grid
  $\Omega_h$. The middle panel shows the corresponding $2\times2$ coarse
  grid $\Omega_H$ (red circles), superimposed on the underlying fine
  grid. In our implementation, coarse-grid unknowns are located at every
  other fine-grid cell center, which leads to a straightforward
  definition of the restriction operator $I_H^h$ and the prolongation
  operator $I_h^H$. The right panel combines the two sets of grid points
  and illustrates how coarse-grid corrections are injected back to the
  fine level: the NGS smoother acts on all fine-grid points (triangles),
  while the coarse-grid problem is solved for the subset of coarse
  points (circles) within the standard FAS framework.

  For a fixed time level $t_{n+1}$ we omit the
  time index and write $\vec{\bm{\psi}}$ for
  $\vec{\bm{\psi}}^{\,n+1}$ and regard $\vec{\bm{\psi}}^{\,n}$ as given.
  On a given grid we introduce the nonlinear operator
  $\mathcal{N}_h:\mathbb{R}^{N_{\rm dof}}\to\mathbb{R}^{N_{\rm dof}}$ and
  the corresponding right-hand side
  $\vec{\bm{g}}_h\in\mathbb{R}^{N_{\rm dof}}$ by
  \begin{equation}\label{eq:Nh-def}
    \mathcal{N}_h\left(\vec{\bm{\psi}}\right)
    := \tfrac{1}{\tau}\,\theta\left(\vec{\bm{\psi}}\right) +
    S\left(\vec{\bm{\psi}}\right) + \xi \vec{\bm{\psi}}
    + \mathbf{M}\left(\vec{\bm{\psi}}^{\,n}\right) \vec{\bm{\psi}},
    \quad
    \vec{\bm{g}}_h
    := \tfrac{1}{\tau}\,\theta(\vec{\bm{\psi}}^{\,n})
    + \xi \vec{\bm{\psi}}^{\,n} -
    \mathbf{M}\left(\vec{\bm{\psi}}^n\right) \vec{\mathbf{z}}.
  \end{equation}
  Then the fully discrete scheme \eqref{eq:fully-bdf1} can be written in
  the compact form
  \begin{equation}\label{eq:Nh-system}
    \mathcal{N}_h\left(\vec{\bm{\psi}}\right) = \vec{\bm{g}}_h.
  \end{equation}
  The NGS smoother \eqref{eq:iterative} is viewed as a relaxation
  operator acting on~\eqref{eq:Nh-system}: one NGS sweep corresponds to
  a forward pass of pointwise updates of \eqref{eq:point-update} over all
  degrees of freedom.

  For the two-grid construction we consider a fine grid $\Omega_h$ and a
  coarse grid $\Omega_H$ obtained by uniform coarsening (e.g., doubling
  the mesh size in each coordinate direction). The discrete operators
  $\mathcal{N}_H$ and right-hand sides $\vec{\bm{g}}_H$ are defined on
  $\Omega_H$ in complete analogy with \eqref{eq:Nh-def}. We denote by
  $I_H^h$ the restriction operator from $\Omega_h$ to $\Omega_H$ and by
  $I_h^H$ the prolongation operator from $\Omega_H$ to $\Omega_h$ (in
    the numerical experiments we use standard full-weighting restriction
  and linear interpolation, but other choices are possible).

  Given a current fine-grid approximation $\vec{\bm{\psi}}_h^{(m)}$ to
  the solution of \eqref{eq:Nh-system}, one two-grid FAS iteration
  produces an updated approximation
  $\vec{\bm{\psi}}_h^{(m+1)} = \mathrm{TG}
  \left(\vec{\bm{\psi}}_h^{(m)},\vec{\bm{g}}_h\right)$ as follows.

  \paragraph{Two-grid FAS algorithm}
  \begin{enumerate}
    \item \textbf{Pre-smoothing on the fine grid.}
      Starting from $\vec{\bm{\psi}}_h^{(m)}$, apply $\nu_1$ NGS
      sweeps to \eqref{eq:Nh-system} on $\Omega_h$:
      \begin{equation}
        \bar{\vec{\bm{\psi}}}_h
        = S_h^{\nu_1}\left(\vec{\bm{\psi}}_h^{(m)},\vec{\bm{g}}_h\right),
      \end{equation}
      where $S_h$ denotes one NGS sweep given by
      \eqref{eq:iterative}-\eqref{eq:point-update}.

    \item \textbf{Fine-grid defect.}
      Compute the nonlinear defect (FAS residual)
      \begin{equation}
        \vec{\bm{d}}_h
        = \vec{\bm{g}}_h - \mathcal{N}_h\left(\bar{\vec{\bm{\psi}}}_h\right).
      \end{equation}

    \item \textbf{Restriction to the coarse grid.}
      Restrict the smoothed approximation and the defect:
      \begin{equation}
        \bar{\vec{\bm{\psi}}}_H = I_H^h \bar{\vec{\bm{\psi}}}_h,
        \qquad
        \vec{\bm{d}}_H = I_H^h \vec{\bm{d}}_h.
      \end{equation}

    \item \textbf{Construct equation on the Coarse grid.}
      Define the FAS coarse-grid right-hand side by
      \begin{equation}
        \vec{\bm{g}}_H^{\rm FAS}
        = \mathcal{N}_H\left(\bar{\vec{\bm{\psi}}}_H\right) + \vec{\bm{d}}_H.
      \end{equation}
      The coarse-grid problem to be solved is then
      \begin{equation}\label{eq:coarse-fas}
        \mathcal{N}_H\left(\vec{\bm{\psi}}_H\right) = \vec{\bm{g}}_H^{\rm FAS}
        \quad \text{on } \Omega_H.
      \end{equation}

    \item \textbf{Approximate coarse-grid solution.}
      Starting from the initial guess $\bar{\vec{\bm{\psi}}}_H$,
      perform $\nu_H$ NGS sweeps for
      \eqref{eq:coarse-fas} on $\Omega_H$ to obtain an approximate
      coarse solution $\widehat{\vec{\bm{\psi}}}_H$.

    \item \textbf{Coarse-grid correction.}
      Construct the the coarse-grid error
      \begin{equation}
        \vec{\bm{e}}_H
        = \widehat{\vec{\bm{\psi}}}_H - \bar{\vec{\bm{\psi}}}_H,
      \end{equation}
      prolongate it to the fine grid
      \begin{equation}
        \vec{\bm{e}}_h = I_h^H \vec{\bm{e}}_H,
      \end{equation}
      and correct the fine-grid approximation:
      \begin{equation}
        \vec{\bm{\psi}}_h^{(m,\mathrm{corr})}
        = \bar{\vec{\bm{\psi}}}_h + \vec{\bm{e}}_h.
      \end{equation}

    \item \textbf{Post-smoothing on the fine grid.}
      Apply $\nu_2$ additional NGS sweeps on $\Omega_h$:
      \begin{equation}
        \vec{\bm{\psi}}_h^{(m+1)}
        = S_h^{\nu_2}\left(
          \vec{\bm{\psi}}_h^{(m,\mathrm{corr})},
          \vec{\bm{g}}_h
        \right).
      \end{equation}
  \end{enumerate}

  One two-grid FAS step is thus completely specified by the NGS smoother
  \eqref{eq:iterative}, the choice of restriction/prolongation
  operators, and the integers $(\nu_1,\nu_2,\nu_H)$. In the numerical
  tests below we monitor the nonlinear residual
  $\mathcal{N}_h(\vec{\bm{\psi}}_h^{(m)}) - \vec{\bm{g}}_h$ in the
  discrete $L^\infty$ norm \eqref{eq:def-l-infty-norm} and terminate the
  two-grid iteration once it falls below a prescribed tolerance.

  We emphasize that the above description focuses on a two-grid FAS
  scheme for clarity. Once the two-grid components (restriction,
  prolongation, and the NGS smoother) are in place, standard multilevel
  V-cycles and full multigrid (FMG) schemes follow by a recursive
  application of the same procedure on successively coarser grids
  \cite{Briggs2000}. In practice, a V-cycle is obtained by replacing
  the coarse-grid solve in the two-grid algorithm with a recursive call
  of the same FAS routine on the next coarser level, while FMG is built
  by starting from the coarsest grid, successively interpolating the
  solution to finer grids, and performing one or several V-cycles on
  each level. Since these extensions are straightforward once the
  two-grid formulation is available, we restrict the presentation to
  the two-grid NGS-FAS scheme and do not repeat the standard multilevel
  pseudocode.

  \section{Numerical results}\label{sec:experiment}

  \noindent \indent We benchmark the algorithm using a 1-d and 2-d
  classical example, respectively, to demonstrate its efficiency,
  robustness, and stability next.

  \subsection{1-d problem}
  In the first example, we consider the classical one-dimensional
  benchmark of Celia et al. on a $40
  \mathrm{cm}$-deep solid column. The HCF and the WRC curve follow the
  Haverkamp model; the corresponding parameters are listed in
  Table.~\ref{tab:case1d_haverkamp_param}. The
  initial condition is $\psi(z, 0) = -61.5 {\rm cm}$, Dirichlet
  boundary conditions are imposed at the top and bottom as follows
  \begin{equation}
    \psi(40{\rm cm}, t) = -20.7{\rm cm}, \ \psi(0, t) = -61.5{\rm cm}.
  \end{equation}
  The final time is $t = 360$ and the sink term is neglected.

  This example is used to assess the efficiency of the proposed
  nonlinear Gauss-Seidel smoother and the convergence of the
  multigrid method. Since WRC $\theta$ satisfies
  assumptions.~\ref{assump:A1}-\ref{assump:A2} (see
  Figure.~\ref{fig:case1d_k_theta_curve}), we set the stabilization
  parameter to $\xi = 0$, and employ a diagonal shift $\Lambda_\kappa = {\rm
  diag}\{0.1, \cdots, 0.1 \}$ in the implementation.

  \begin{table}[H]
    \centering
    \caption{Soil-specific parameters and values used in the 1-D case
    study of Celia et al. (1990) based on Haverkamp et al. (1977).}
    \begin{tabular}{lcc}
      \toprule
      \textbf{Soil-specific parameter} & \textbf{Value} & \textbf{Unit} \\
      \midrule
      Saturated hydraulic conductivity, \(K_s\) & 0.00944 & cm/s \\
      Saturated soil moisture content, \(\theta_s\) & 0.287 & -- \\
      Residual soil moisture content, \(\theta_r\) & 0.075 & -- \\
      \(\alpha\) in Haverkamp’s model & \(1.611 \times 10^6\) & cm \\
      \(A\) in Haverkamp’s model & \(1.175 \times 10^6\) & cm \\
      \(\beta\) in Haverkamp’s model & 3.96 & -- \\
      \(\gamma\) in Haverkamp’s model & 4.74 & -- \\
      Total time, \(T\) & 360 & s \\
      \bottomrule
    \end{tabular}

    \label{tab:case1d_haverkamp_param}
  \end{table}

  \begin{figure}[H]
    \begin{center}
      \includegraphics[width=0.48\textwidth]{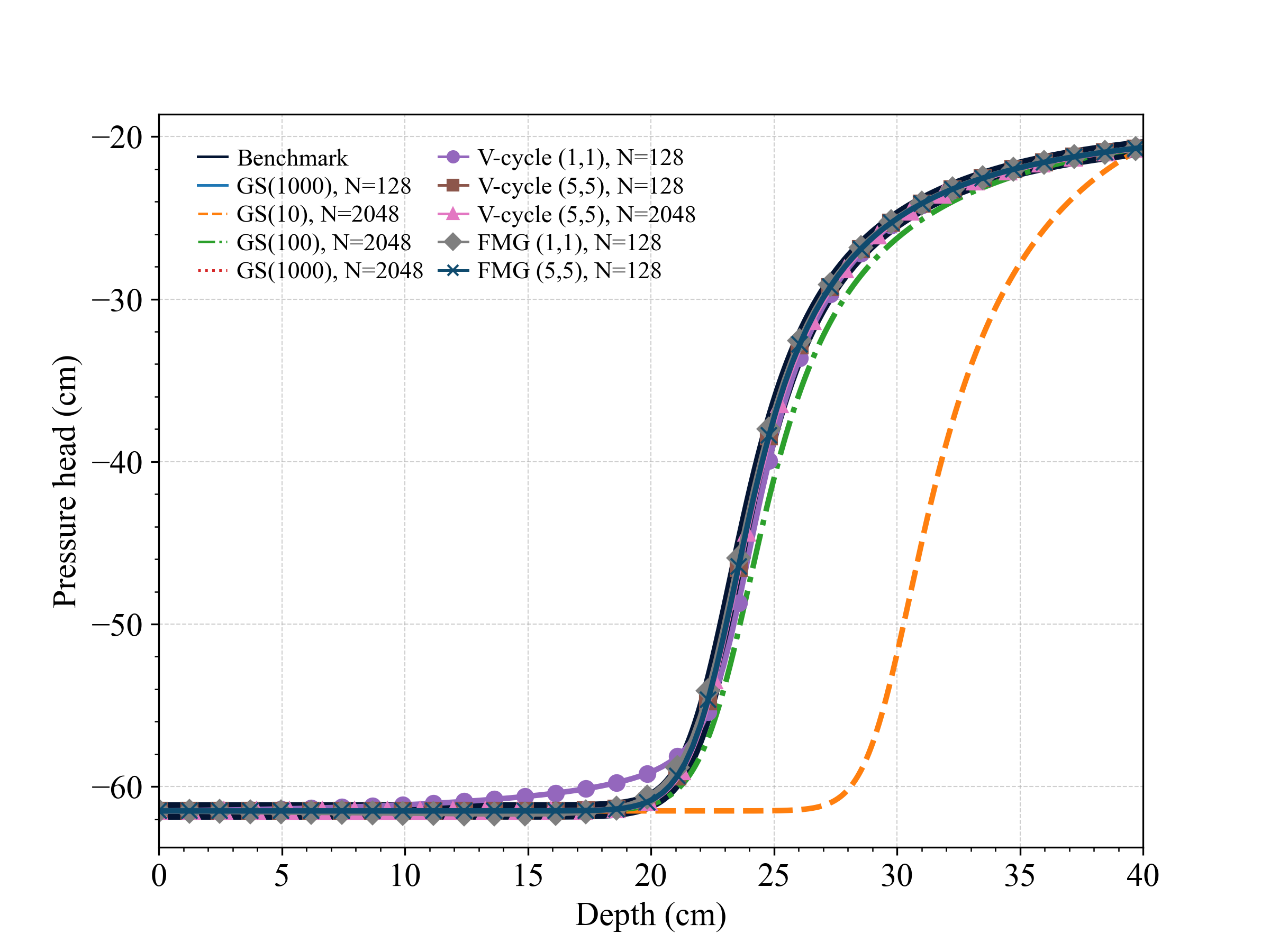}
      \includegraphics[width=0.48\textwidth]{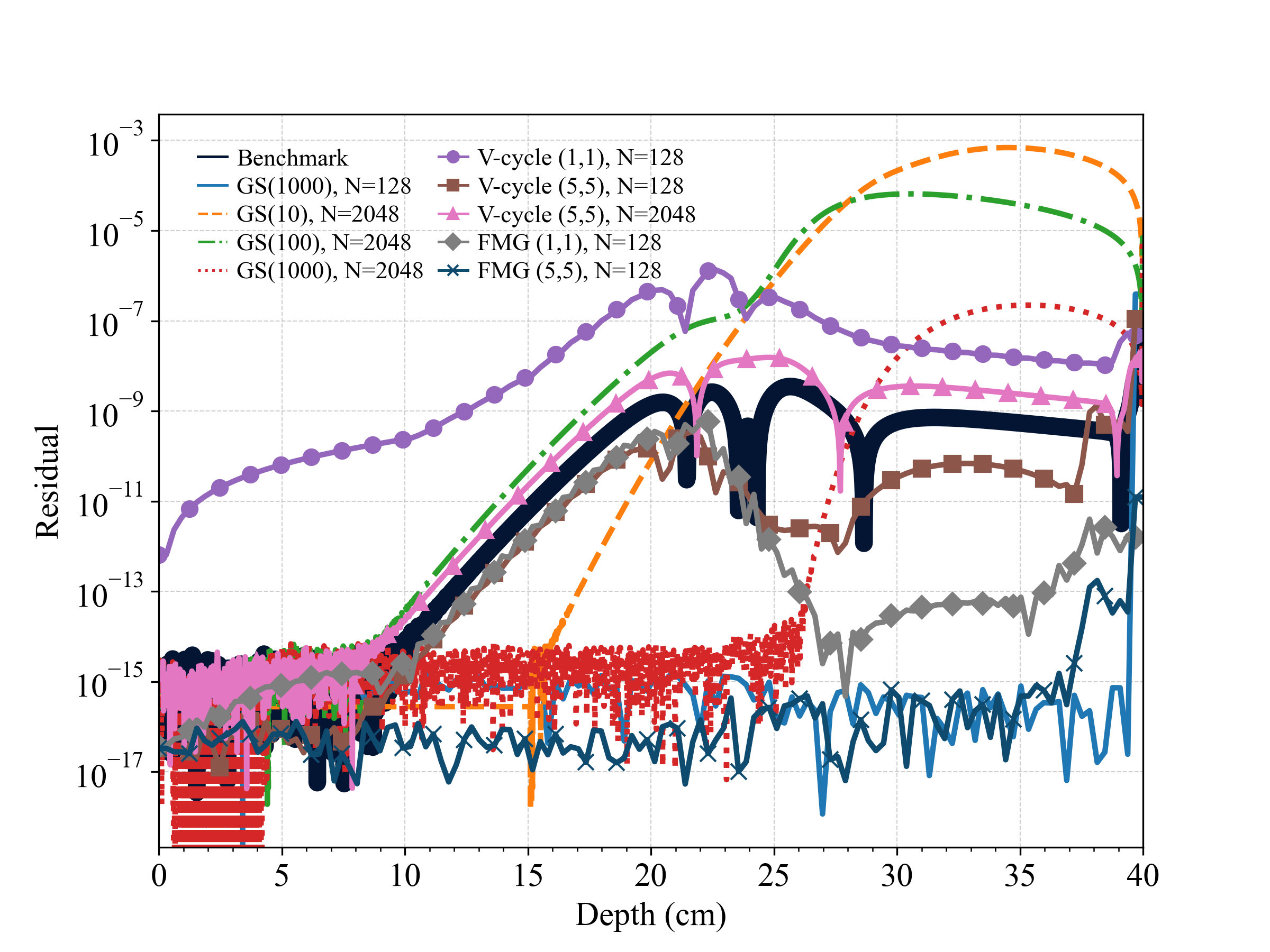}
      \includegraphics[width=0.48\textwidth]{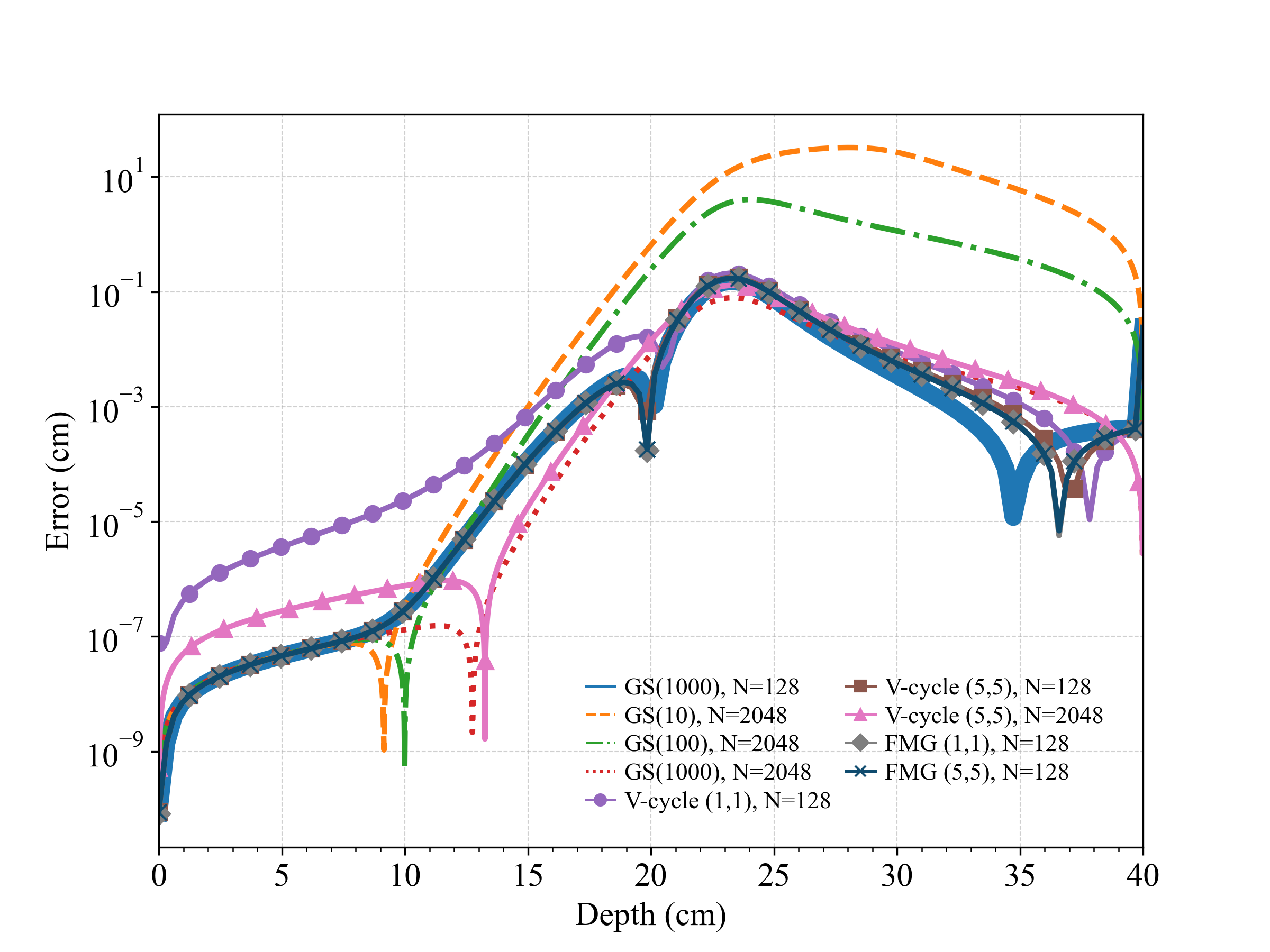}
      \includegraphics[width=0.48\textwidth]{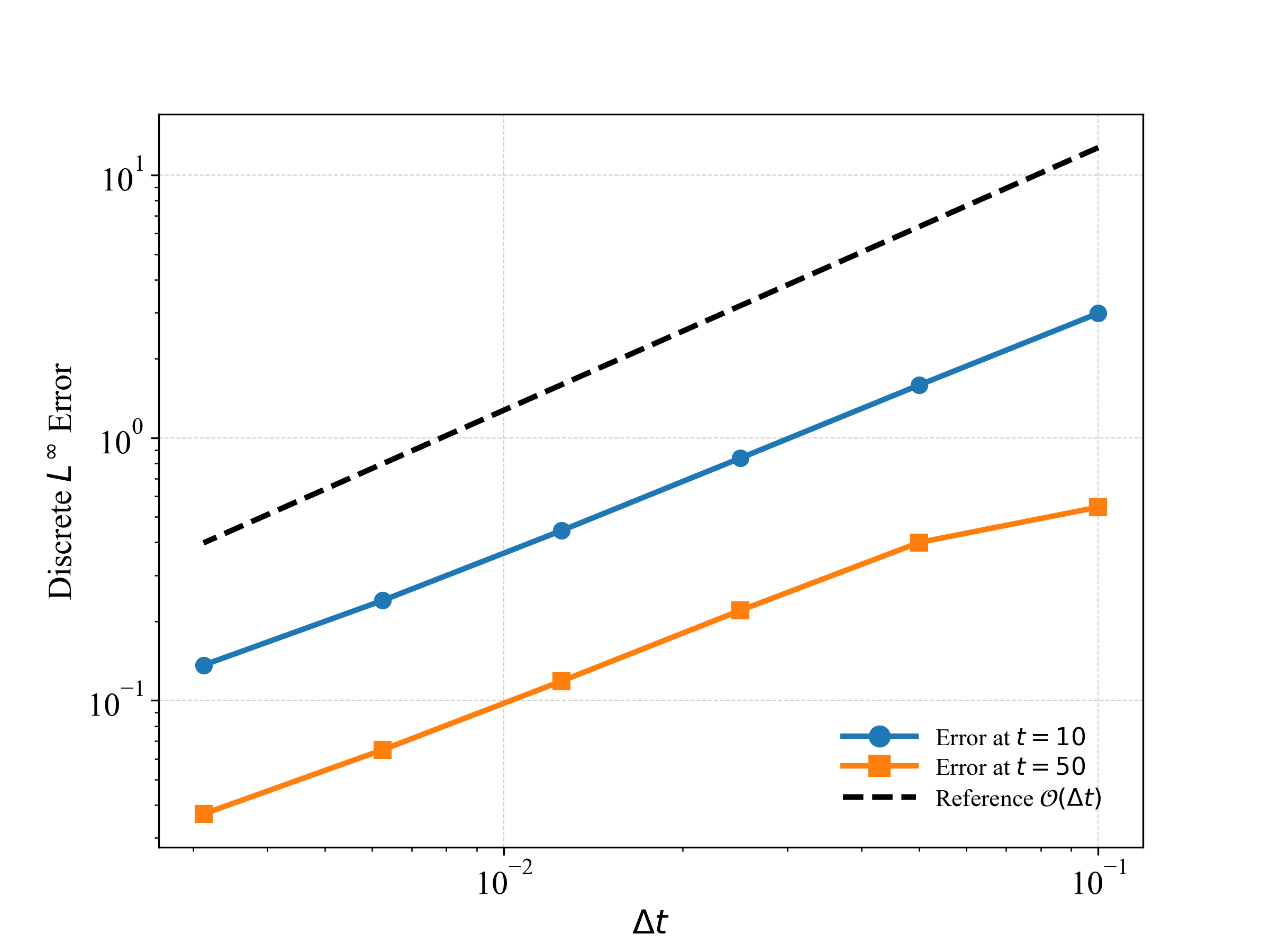}
    \end{center}
    \caption{Performance of Gauss–Seidel and multigrid solvers for
      the 1D benchmark problem. The first three panels display the
      numerical solutions, residual norms, and pointwise errors at $t=360$
      while the last panel shows the convergence behavior at $t=10$ and
      $t=50$. The first order convergence rate is confirmed.
    }\label{fig:case1d_sol_res_err}
  \end{figure}

  \begin{table}[H]
    \centering
    \caption{CPU time various methods at resolution $N
    = 2048$ up to $t = 360\mathrm{s}$}
    \label{tab:cpu_time}
    \begin{tabular}{lcccc}
      \hline
      Method   & V-cycle(5, 5) & FMG(5, 5) & GS(100) & Picard \\
      \hline
      CPU Time (s)  & 20 & 42 & 69 & 10 \\
      \hline
    \end{tabular}
  \end{table}

  The first panel of Figure.~\ref{fig:case1d_sol_res_err} presents
  the solutions computed by various solvers on two
  spatial resolutions: $N = 128$ and $N = 2048$, respectibvely. Here,
  ${\rm GS}(k)$ denotes the proposed GS iterative method with $k$ iterations,
  ${\rm V}$-${\rm cycle}(k_1, k_2)$ denotes a multigrid ${\rm V}$-${\rm cycle}$
  with $k_1$ pre-smoothing and $k_2$ post-smoothing. The benchmark solution
  is generated using a ${\rm V}$-${\rm cycle}(10, 10)$ on the $N = 2048$ grid
  with a time step $\tau =0.05$.  All other solutions are performed
  with  $\tau = 0.1$. The second and third panels of
  Figure.~\ref{fig:case1d_sol_res_err} show, respectively the residual
  and pointwise error of the numerical solutions at $t = 360$.

  The results indicate that the GS method requires a relatively
  large number of iterations to achieve convergence, whereas the
  multigrid approach attains an accurate solution within only a few
  cycles. Furthermore, the obtained numerical results are fully
  consistent with those reported in \cite{Song2026}.

  The last panel of Figure~\ref{fig:case1d_sol_res_err} presents the
  temporal convergence results of the proposed scheme at the final
  times, $t = 10$ and $t = 50$, respectively. The numerical solutions
  are computed
  on a mesh with $N = 1024$ using time steps $\tau =
  \frac{2^{-k}}{10}$, where $k = 0, 1, 2, 3, 4, 5$. These solutions
  are compared with a reference solution obtained on a finer mesh
  with $N = 2048$ and $\tau = 10^{-4}$. The errors are measured in
  the discrete $L^\infty$ norm. As expected, first-order convergence in time
  is clearly observed.

  Table~\ref{tab:cpu_time} reports the CPU time required by
  various solvers.
  As the problem size increases, one observes that both the V-cycle(5,5)
  and FMG(5, 5) schemes become substantially more efficient than the
  Gauss--Seidel method. The Picard algorithm \cite{Song2026} also
  performs well in the one-dimensional setting, owing to the
  availability of an efficient direct solver for tridiagonal linear
  systems and the simplified treatment of the nonlinear term during the
  iteration. However, such advantages do not extend to
  higher-dimensional problems, the cost of solving the
  linearized system becomes significantly greater.

  We next compare numerical solutions obtained using various
  constitutive laws. To ensure a meaningful comparison,  the parameters
  in the Gardner and van Genuchten–Mualem models are selected such that
  their hydraulic conductivity and water-retention curves resemble
  those of the Haverkamp model under the parameters specified in
  Table.~\ref{tab:case1d_haverkamp_param}.

  \begin{figure}[H]
    \begin{center}
      \includegraphics[width=0.45\textwidth]{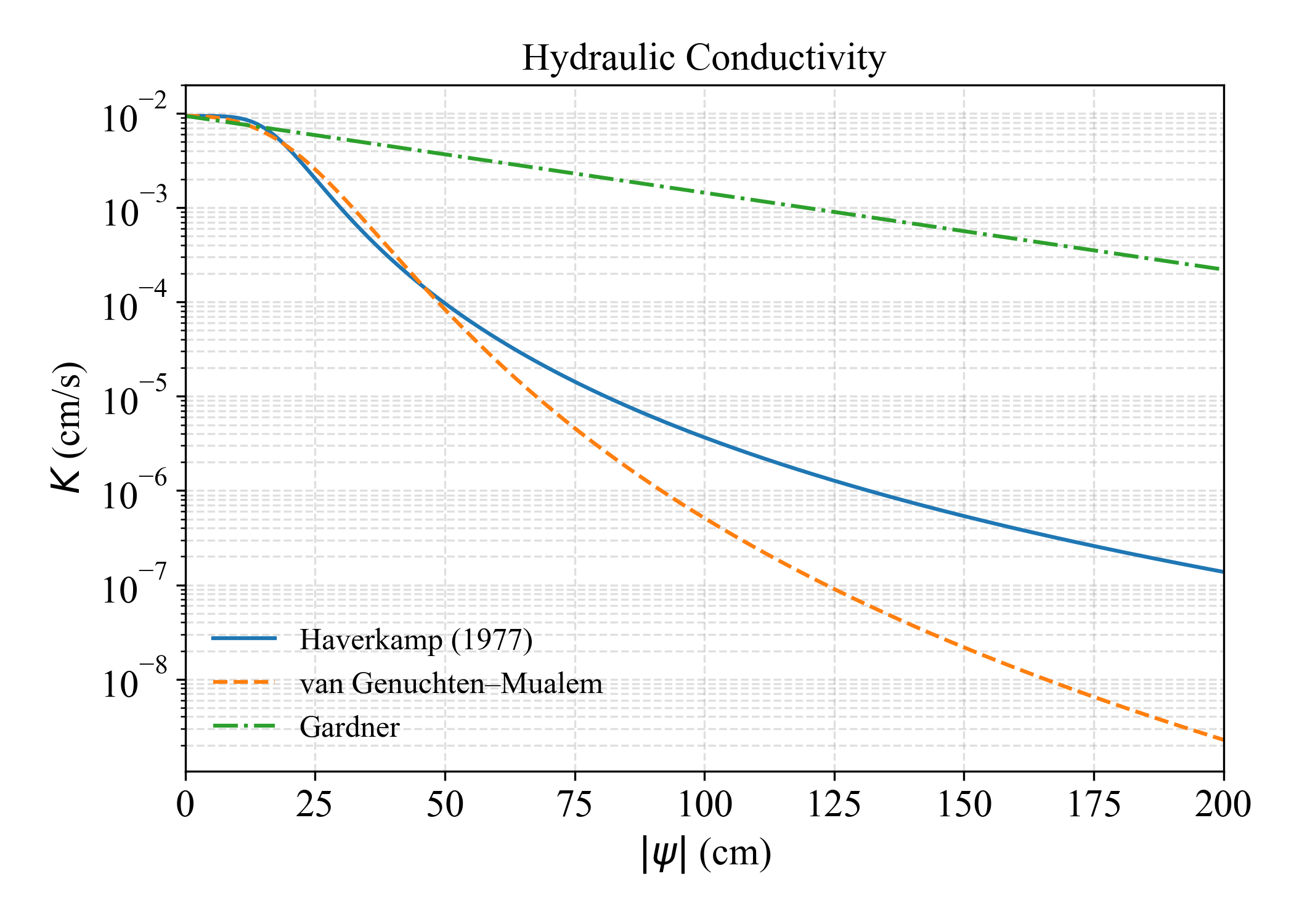}
      \includegraphics[width=0.45\textwidth]{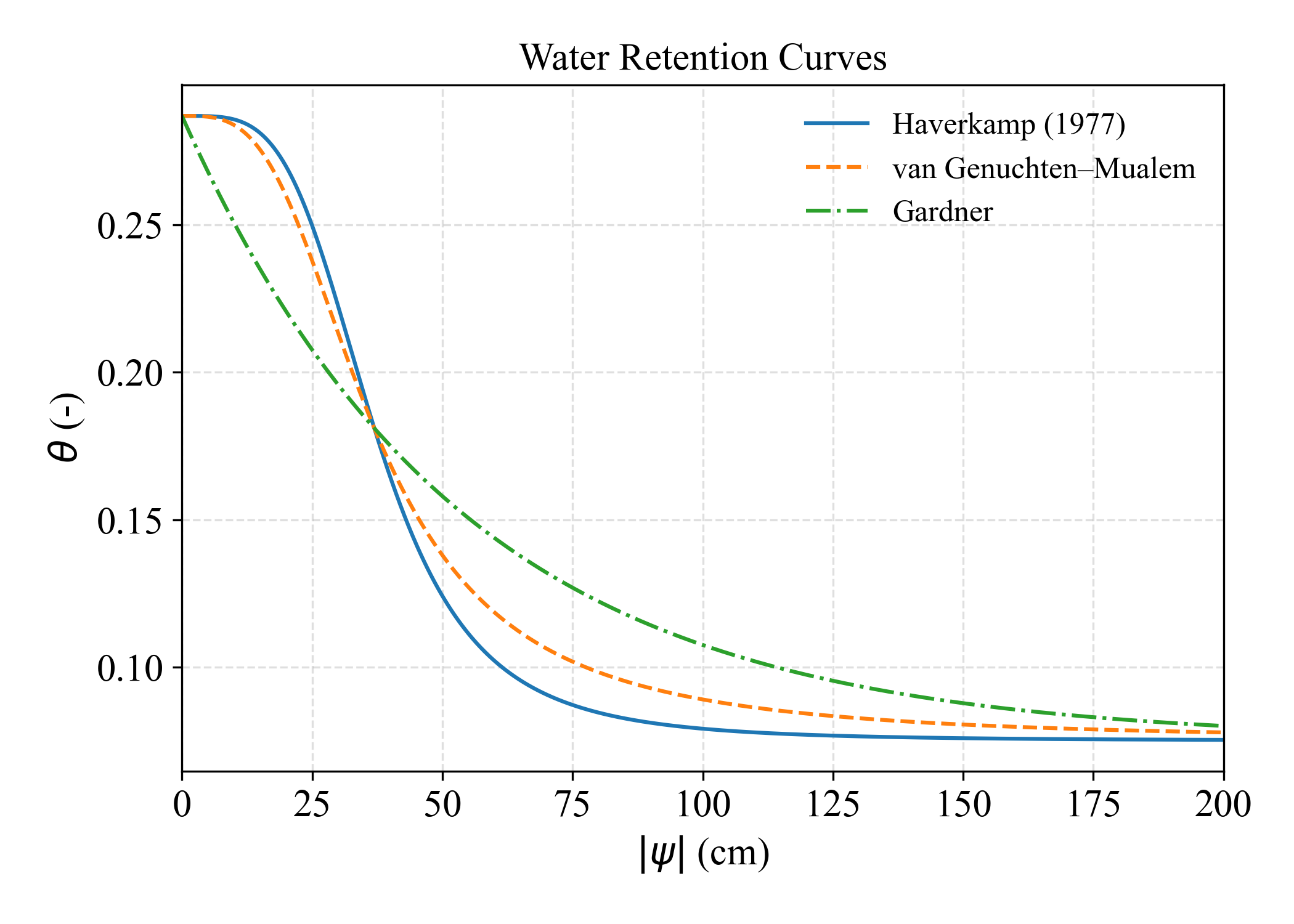}
    \end{center}
    \caption{Hydraulic conductivity and water-retention curves
      corresponding to the Haverkamp, Gardner, and van Genuchten–Mualem
    parametrizations, respectively.}\label{fig:case1d_k_theta_curve}
  \end{figure}

  Let $h \equiv |\psi|$ denote the suction head (cm). To isolate the
  effect of different constitutive relations, we keep $K_s$, $\theta_s$,
  and $\theta_r$ identical across all models. Using the Haverkamp
  parametrization, we introduce two characteristic suction levels,
  \begin{equation}
    h^{\theta}_{1/2} := \alpha^{1/\beta}
    \quad\text{and}\quad
    h^{K}_{1/2} := A^{1/\gamma},
  \end{equation}
  corresponding to $\theta = \tfrac{\theta_s+\theta_r}{2}$ and $K =
  \tfrac{K_s}{2}$, respectively. These half-suction values determine
  the horizontal placement of the WRC and HCF.

  For Gardner's exponential constitutive law
  \begin{equation}
    \theta(h)=\theta_r+(\theta_s-\theta_r)\,\mathrm{e}^{-\alpha_G h},
    \  K(h)=K_s\,\mathrm{e}^{-\alpha_G h},
  \end{equation}
  both, $\theta$ and $k$ decay at the same rate. Consequently, WRC
  and HCF share the same half-suction value. To align the WRC
  transition point with that of the
  Haverkamp model, we set
  \begin{equation}
    \alpha_G=\tfrac{\ln 2}{\,h^{\theta}_{1/2}}.
  \end{equation}

  For the van Genuchten-Mualem model, we
  let $m=1-\tfrac{1}{n}$ and $S_e=(1+(\alpha_{VG} h)^n)^{-m}$. We first
  enforce agreement in the WRC half-suction by requiring
  \begin{equation}
    S_e(h^{\theta}_{1/2})=\tfrac12
    \;\Rightarrow\;
    \alpha_{VG}(n)=\tfrac{\bigl(2^{1/m}-1\bigr)^{1/n}}{h^{\theta}_{1/2}},
  \end{equation}
  thus fixing the position of $\theta(h)$ for any chosen $n>1$.
  With $l$ taken as $0.5$, we then determine $n$ by matching
  the HCF half-suction:
  \begin{equation}
    \tfrac{K_{\text{VG}}(h^{K}_{1/2};\,\alpha_{VG}(n),\,n,\,l=0.5)}{K_s}
    \;=\;\tfrac12,
  \end{equation}
  where $K_{\text{VG}}=K_s\,S_e^{\,l}\,[\,1-(1-S_e^{1/m})^{m}\,]^2$.
  This leads to a one-dimensional root-finding problem for $n$;
  once $n^\star$ is obtained, we assign
  $\alpha_{VG}^\star=\alpha_{VG}(n^\star)$.

  Using the Haverkamp parameters  in Table.~\ref{tab:case1d_haverkamp_param},
  we obtain
  \begin{equation}
    h^{\theta}_{1/2}=\alpha^{1/\beta}\approx 36.94~\text{cm},\qquad
    h^{K}_{1/2}=A^{1/\gamma}\approx 19.08~\text{cm}.
  \end{equation}
  Hence,
  \begin{equation}
    \alpha_G=\tfrac{\ln 2}{h^{\theta}_{1/2}}
    \approx 0.01877~\text{cm}^{-1}.
  \end{equation}
  For the  van Genuchten-Mualem model, solving the above condition with
  $l=0.5$ yields
  \begin{equation}
    n^\star \approx 3.349,\
    \alpha_{VG}^\star \approx 0.03165~\text{cm}^{-1}, \
    m=1 - \tfrac{1}{n^\star}\approx 0.701.
  \end{equation}
  These choices align the Gardner and van Genuchten curves with the
  Haverkamp model at the key transition points while preserving the
  characteristic tail behaviors of each constitutive law, as
  illustrated in Figure~\ref{fig:case1d_k_theta_curve}.

  \begin{table}[H]
    \centering
    \begin{tabular}{lccc}
      \toprule
      \textbf{Symbol} & \textbf{Gardner} & \textbf{van
      Genuchten--Mualem} & \textbf{Unit} \\
      \midrule
      $K_s$              & $0.00944$                & $0.00944$
      & cm/s \\
      $\theta_s$         & $0.287$              & $0.287$           & -- \\
      $\theta_r$         & $0.075$              & $0.075$           & -- \\
      $\alpha$         & $0.01877$            & $0.03165$
      & cm$^{-1}$ \\
      $n$                & --                   & $3.349$              & -- \\
      $m$                & --                   & $0.701$              & -- \\
      $l$                & --                   & $0.5$                & -- \\
      \bottomrule
    \end{tabular}
    \caption{Parameters used in the Gardner and van Genuchten--Mualem
    constitutive laws.}
    \label{tab:constitutive_param}
  \end{table}

  \begin{figure}
    \begin{center}
      \includegraphics[width=0.24\textwidth]{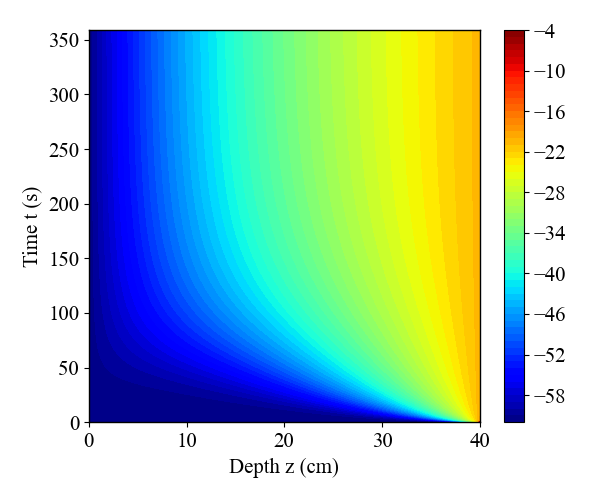}
      \includegraphics[width=0.24\textwidth]{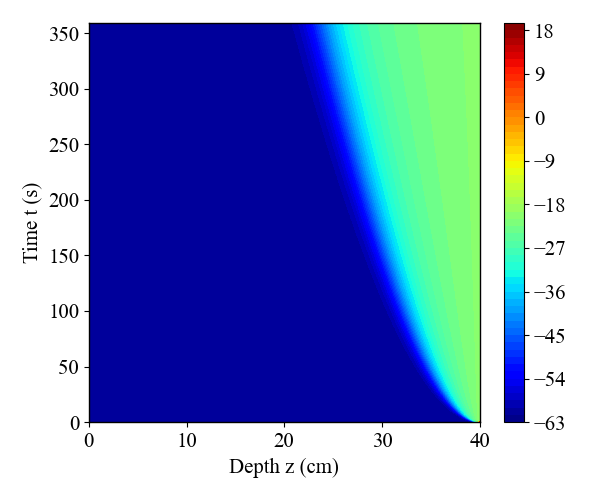}
      \includegraphics[width=0.24\textwidth]{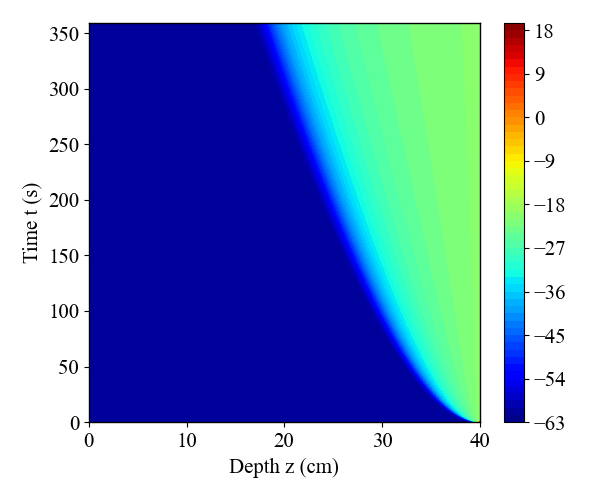}
      \includegraphics[width=0.24\textwidth]{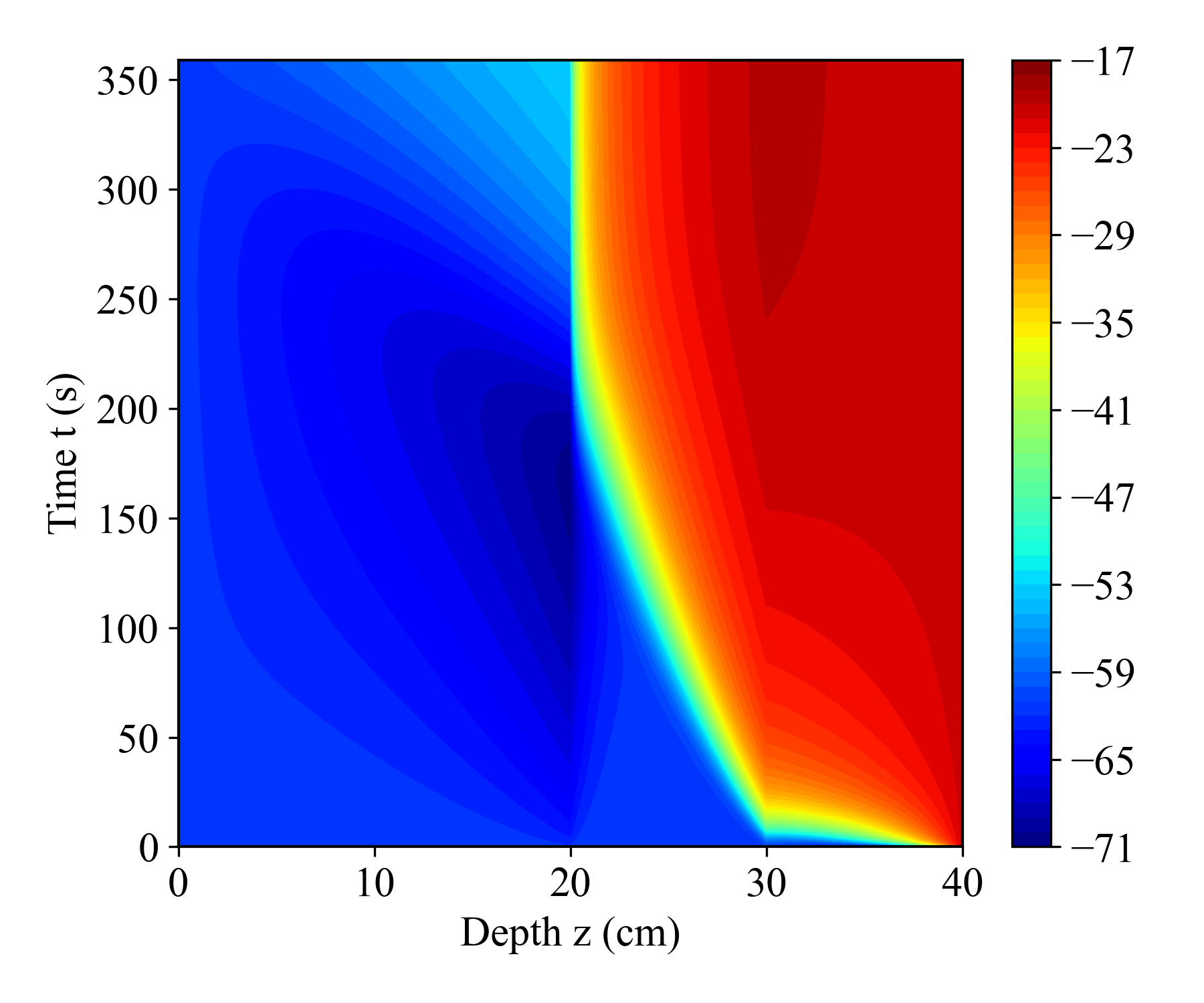}
    \end{center}
    \caption{Comparison of solutions from $t = 0$ to $t =
      360$ obtained using different constitutive models.
      From left to right: Gardner model, Haverkamp model, van
    Genuchten model, and the composite model.}\label{fig:solution_diff_k_theta}
  \end{figure}

  Instead of considering the three constitutive laws separately, we
  also examine a test case in which the constitutive relations are
  defined as a spatially composite function of the three models.
  Specifically, the WRC and HCF are given by
  \begin{equation}
    \theta(\psi, z) =
    \left\lbrace
    \begin{aligned}
      & \theta_{Gardner}(\psi), & 30 \leq z < 40, \\
      & \theta_{Haver}(\psi), & 20 \leq z < 30, \\
      & \theta_{VG}(\psi), &\text{else}, \\
    \end{aligned}
    \right., \quad
    K(\psi, z) =
    \left\lbrace
    \begin{aligned}
      & K_{Gardner}(\psi), & 30 \leq z < 40, \\
      & K_{Haver}(\psi), & 20 \leq z < 30, \\
      & K_{VG}(\psi), &\text{else}, \\
    \end{aligned}
    \right.
  \end{equation}

  Figure~\ref{fig:solution_diff_k_theta} presents four contour plots
  showing the temporal evolution of the pressure head obtained using
  the Gardner, Haverkamp, van Genuchten–Mualem, and composite
  constitutive laws, respectively. The solution corresponding to the
  Gardner model (left panel) exhibits a markedly different infiltration
  behavior: the wetting front propagates more rapidly and features a
  smoother and more diffuse transition. This behavior is consistent
  with the exponential form of the hydraulic conductivity and soil
  water retention functions inherent in the Gardner model.

  In contrast, the Haverkamp and van Genuchten–Mualem solutions are
  remarkably similar. Their wetting fronts exhibit almost identical
  contour shapes. This close resemblance arises
  from the fact that their HCF and WRC curves were calibrated to match
  at key transition points, resulting in constitutive behaviors that
  differ only slightly in phase but not in overall structure.
  Consequently, the two models produce nearly indistinguishable
  transient profiles over the full time interval, $0 \leq t \leq 360$.

  The solution obtained with the composite constitutive law exhibits a
  piecewise infiltration dynamics that clearly reflects the layered
  definition of the soil hydraulic properties. Distinct changes in the
  propagation speed and shape of the wetting front can be observed at
  the interfaces $z=20$ and $z=30$, where the constitutive relations
  switch between the van Genuchten–Mualem, Haverkamp, and Gardner
  models. In particular, the wetting front accelerates within the
  Gardner layer, while a comparatively sharper front is maintained in
  the Haverkamp and van Genuchten–Mualem regions. Despite the
  discontinuities in the constitutive functions, the pressure head
  field remains continuous across the interfaces, indicating that the
  proposed numerical scheme robustly captures heterogeneous soil
  profiles with spatially varying hydraulic characteristics.
  \subsection{2-d problem}

  We consider a two-dimensional infiltration test in the domain, $(0,
  10)^2$. The soil hydraulic properties are given by the Gardner
  constitutive relation in Table~\ref{tab:models}. The
  initial pressure head is $h(\mathbf{x}, 0 ) = -10$. At the top
  boundary, we prescribe a nonuniform Dirichlet
  condition
  \begin{equation}\label{eq:top-bc}
    h(x,10,t) = \frac{1}{\alpha}
    \ln\!\Bigl(
      e^{-10 \alpha}
      + \left(1 - e^{-10\alpha}\right)
      \sin\frac{\pi x}{10}
    \Bigr),
  \end{equation}
  which yields \(h = 0\) at the midpoint \(x = 5\) and smoothly
  decreases to \(h = -10\) at the lateral edges \(x = 0\) and
  \(x = 10\). On all other boundaries, we impose a constant head $h = -10$.
  In the simulations we set
  \(\theta_s = 0.45\),
  \(\theta_r = 0.15\),
  \(\alpha = 0.164\mathrm{m}^{-1}\),
  \(K_s = 0.1\mathrm{m} \cdot \mathrm{day}^{-1}\),
  The problem is simulated over a 3-day interval. We use a
  spatial resolution of $N = 128$ with a time step size $\tau = 0.1$.
  \begin{figure}[H]
    \begin{center}
      \includegraphics[width=0.5\textwidth]{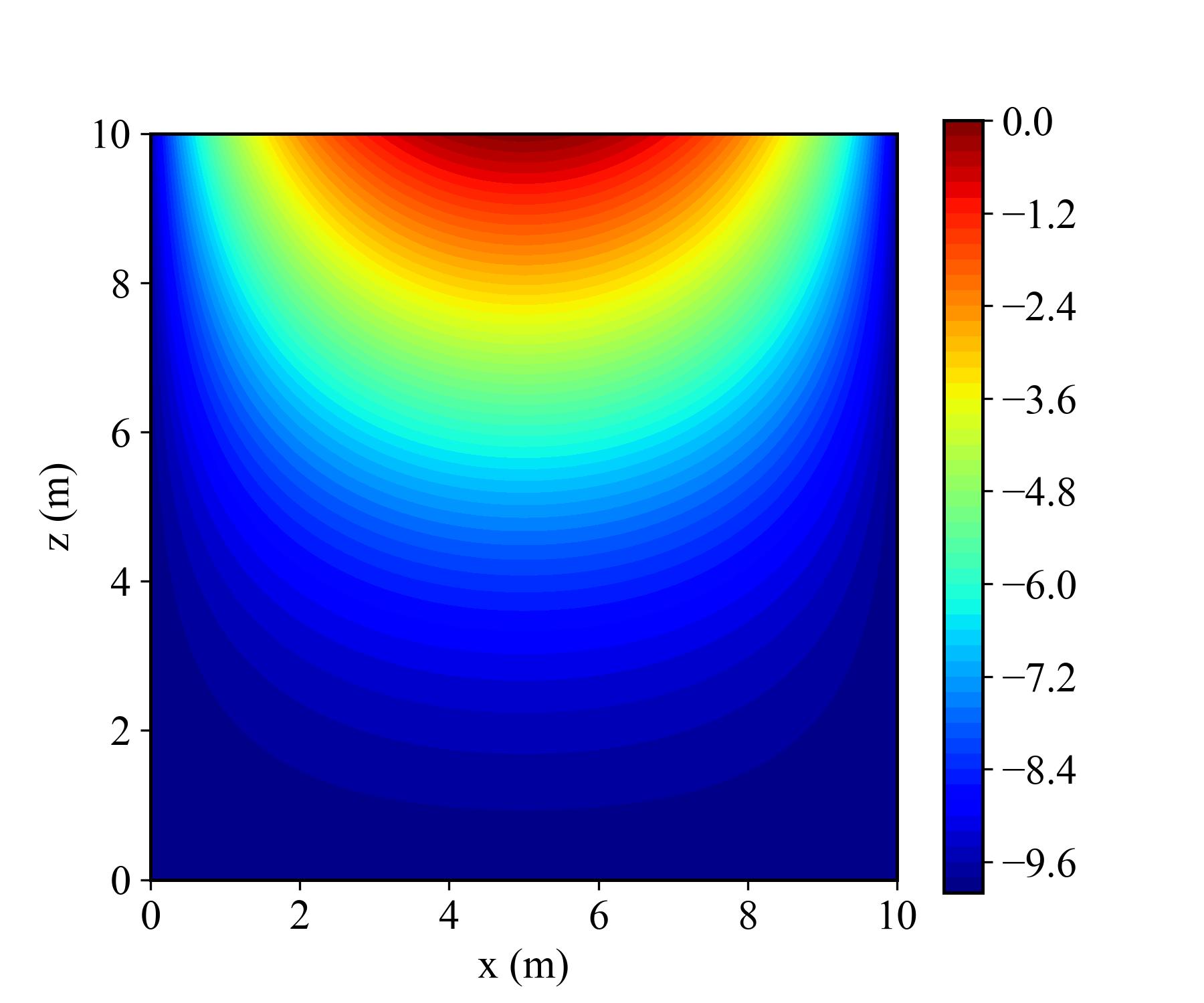}
    \end{center}
    \caption{Pressure head distribution at $t = 3$ days for the 2D
    infiltration test.}
    \label{fig:2d-exp-snapshot}
  \end{figure}

  A snapshot of the numerical solution at $t = 3$ days is shown in
  Figure~\ref{fig:2d-exp-snapshot}. The shape and position of the
  wetting front are in good agreement with the reference results
  reported in \cite{Chen2023}. In addition, the total CPU time
  required for this example is $2.216\mathrm{s}$.

  We next assess the performance of the proposed algorithms for
  simulating infiltration in a heterogeneous medium.
  A two-layer soil configuration is considered, and the van
  Genuchten–Mualem constitutive law is adopted.
  The computational domain is chosen as $\Omega = (0, 100)^2$, and the
  two soil layers are separated by a smooth interface (see
  Figure~\ref{fig:two-layer})
  \begin{equation}
    \zeta(x) = 100 \left( 0.1 \left(1 - \cos{\left(\tfrac{\pi
    x}{100}\right)}\right) + 0.45\right).
  \end{equation}

  The hydraulic parameters in the van Genuchten--Mualem constitutive
  law are specified as follows:
  \begin{equation}
    \begin{aligned}
      & z \geq \zeta(x): \quad \theta_s =  0.50, \ \theta_r = 0.120, \ \alpha
      = 0.025\mathrm{cm}^{-1}, \ n = 3.00, \ K_s =
      0.25\mathrm{cm}\cdot h^{-1}, \\
      & z < \zeta(x): \quad \theta_s =  0.46, \ \theta_r = 0.034, \ \alpha
      = 0.016\mathrm{cm}^{-1}, \ n = 1.37, \ K_s =
      2.00\mathrm{cm}\cdot h^{-1}, \\
    \end{aligned}
  \end{equation}
  The initial condition is prescribed as $\psi(x, z, 0) = -z$, and we
  impose homogeneous Dirichlet boundary conditions on the top and
  bottom boundaries, while homogeneous Neumann conditions are applied
  on the left and right boundaries.

  A uniform grid of size $256 \times 256$ is used in space, and the time step
  is set to $\tau = 0.01$. To solve the nonlinear system, the FMG
  method is employed with 10 pre-smoothing and 10 post-smoothing steps.
  In the nonlinear Gauss-Seidel smoother, we take $\bm{\Lambda}_\kappa
  = 0.1\mathbf{I}$, $\xi=0$.

  \begin{figure}[htbp]
    \centering
    \begin{tikzpicture}[scale=0.03]

      \def\interface{
        (0,45)
        (20,47)
        (40,52)
        (60,57)
        (80,61)
        (100,63)
      }

      \def\interfaceR{
        (100,63)
        (80,61)
        (60,57)
        (40,52)
        (20,47)
        (0,45)
      }

      \path[fill=blue!8]
      (0,100) -- (100,100) --       
      (100,63) --                   
      plot[smooth] coordinates \interfaceR --  
      (0,100) --                    
      cycle;

      \path[fill=brown!10]
      (0,0) -- (100,0) --            
      (100,63) --                    
      plot[smooth] coordinates \interfaceR --  
      (0,0) --                       
      cycle;

      \draw[line width=0.8pt] (0,0) rectangle (100,100);

      \draw[line width=1.2pt]
      plot[smooth] coordinates \interface;

      \node[below] at (0,0) {0};
      \node[below] at (100,0) {100};
      \node[left]  at (0,100) {100};

      \node at (55,75) {$z \ge \zeta(x)$};
      \node at (55,28) {$z \le \zeta(x)$};

    \end{tikzpicture}
    \caption{Geometry of the two-layer heterogeneous soil domain, where
    the layers are separated by a smooth interface: $z = \zeta(x)$.}
    \label{fig:two-layer}
  \end{figure}

  \begin{figure}[H]
    \begin{center}
      \includegraphics[width=0.24\textwidth]{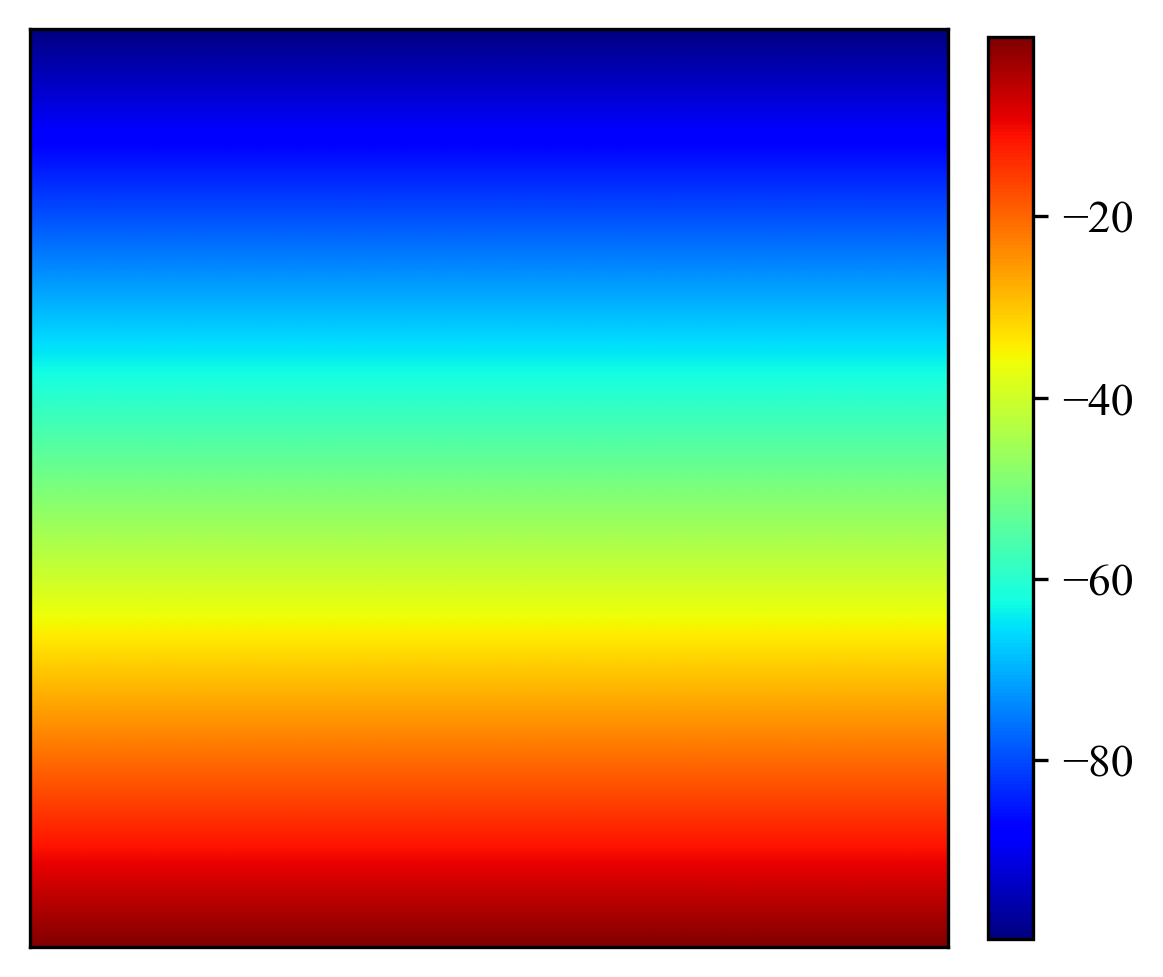}
      \includegraphics[width=0.24\textwidth]{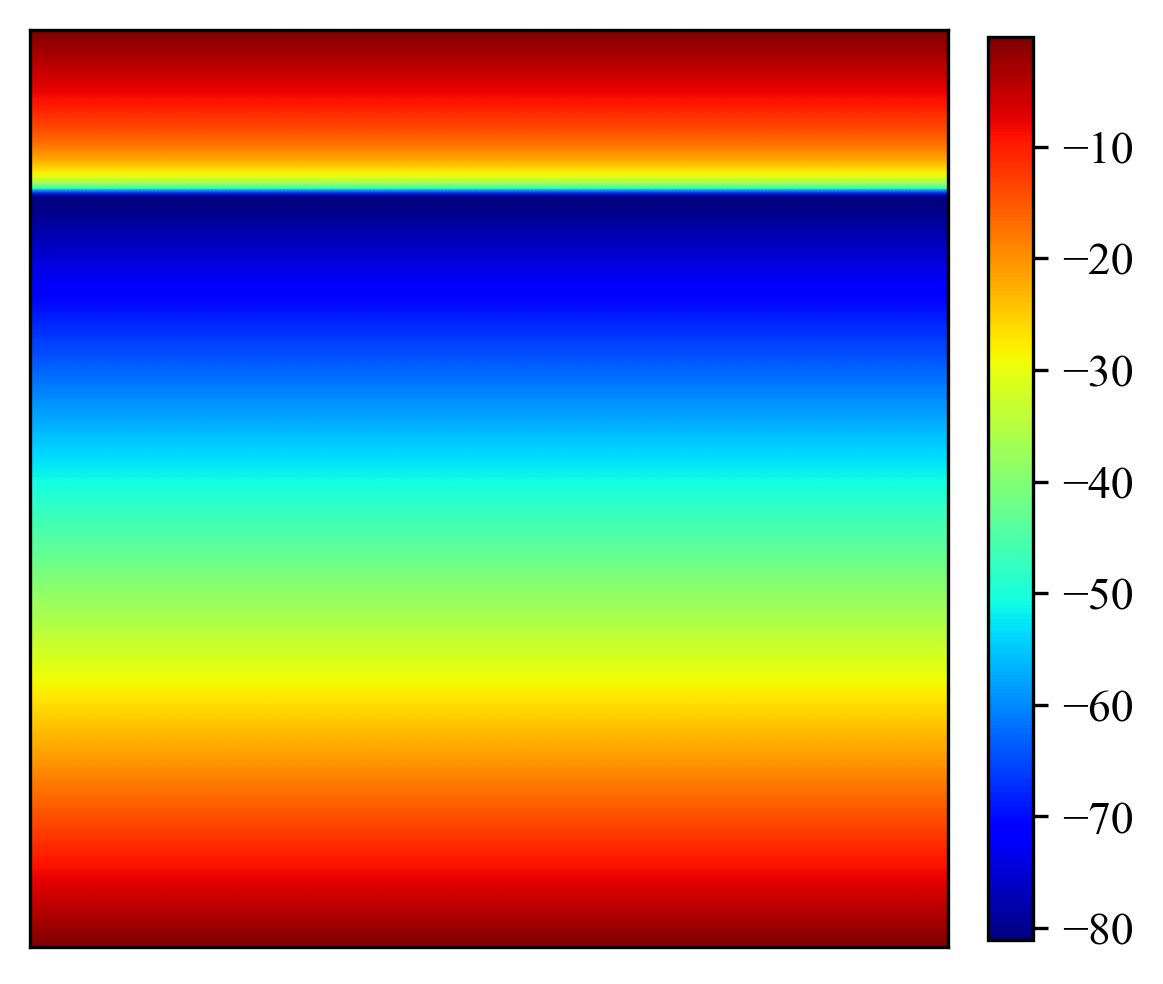}
      \includegraphics[width=0.24\textwidth]{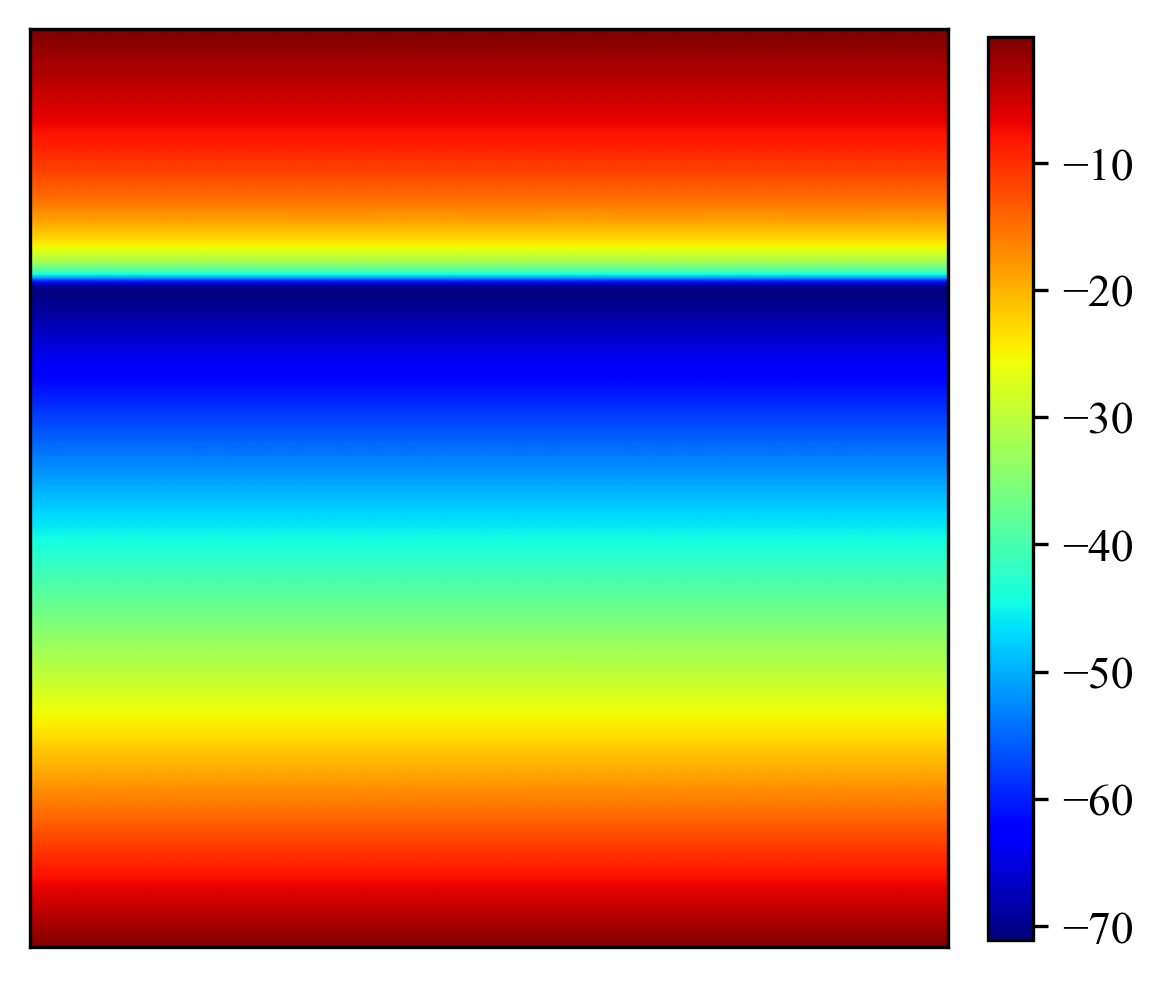}
      \includegraphics[width=0.24\textwidth]{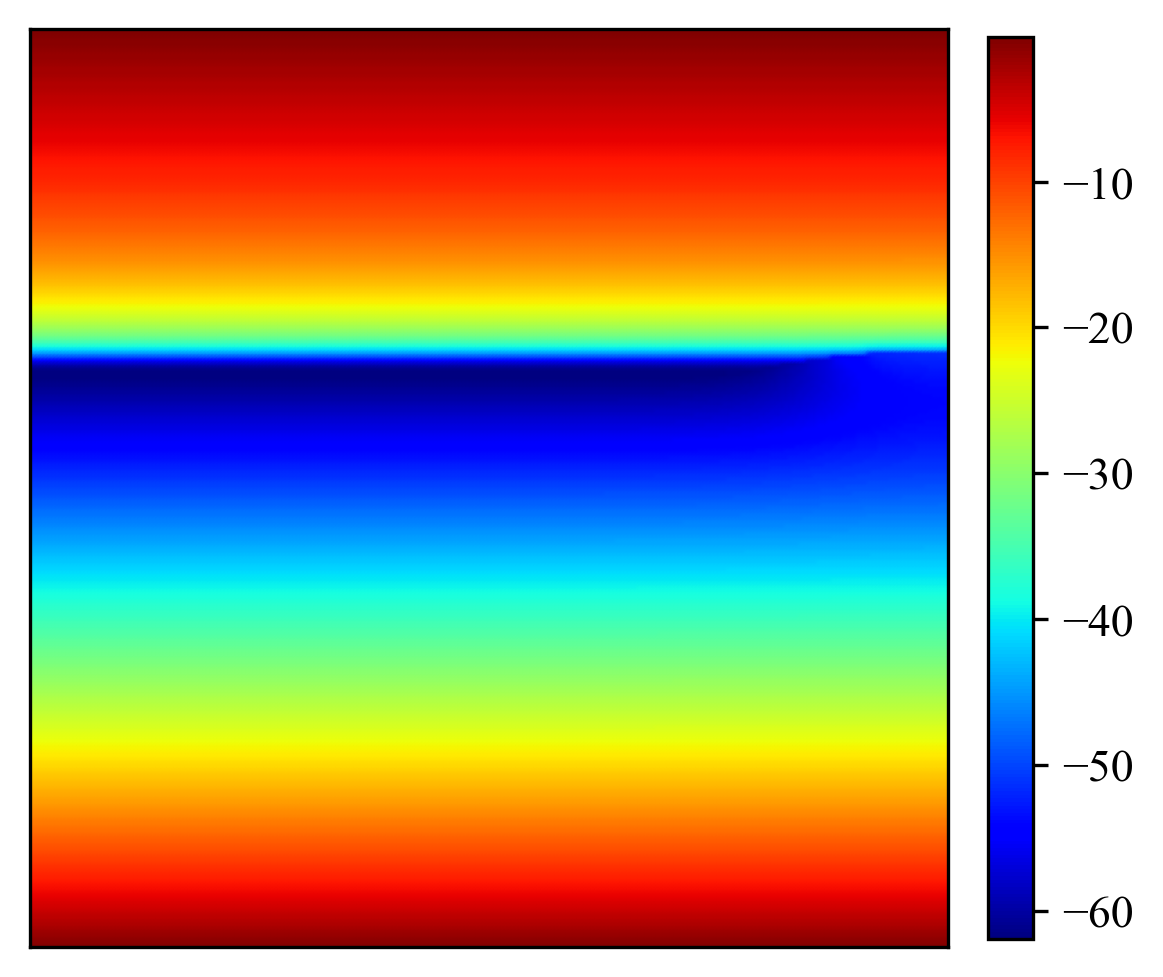}
      \includegraphics[width=0.24\textwidth]{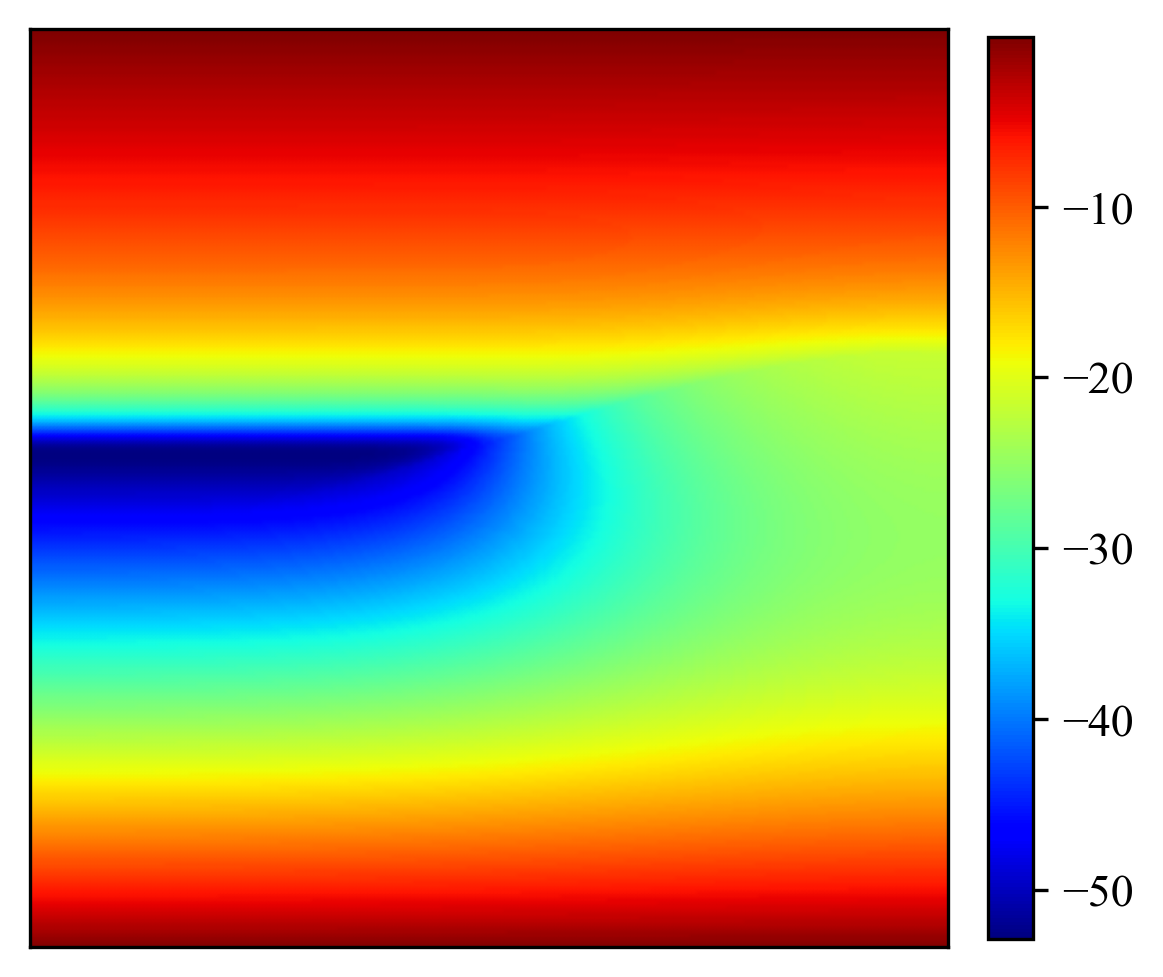}
      \includegraphics[width=0.24\textwidth]{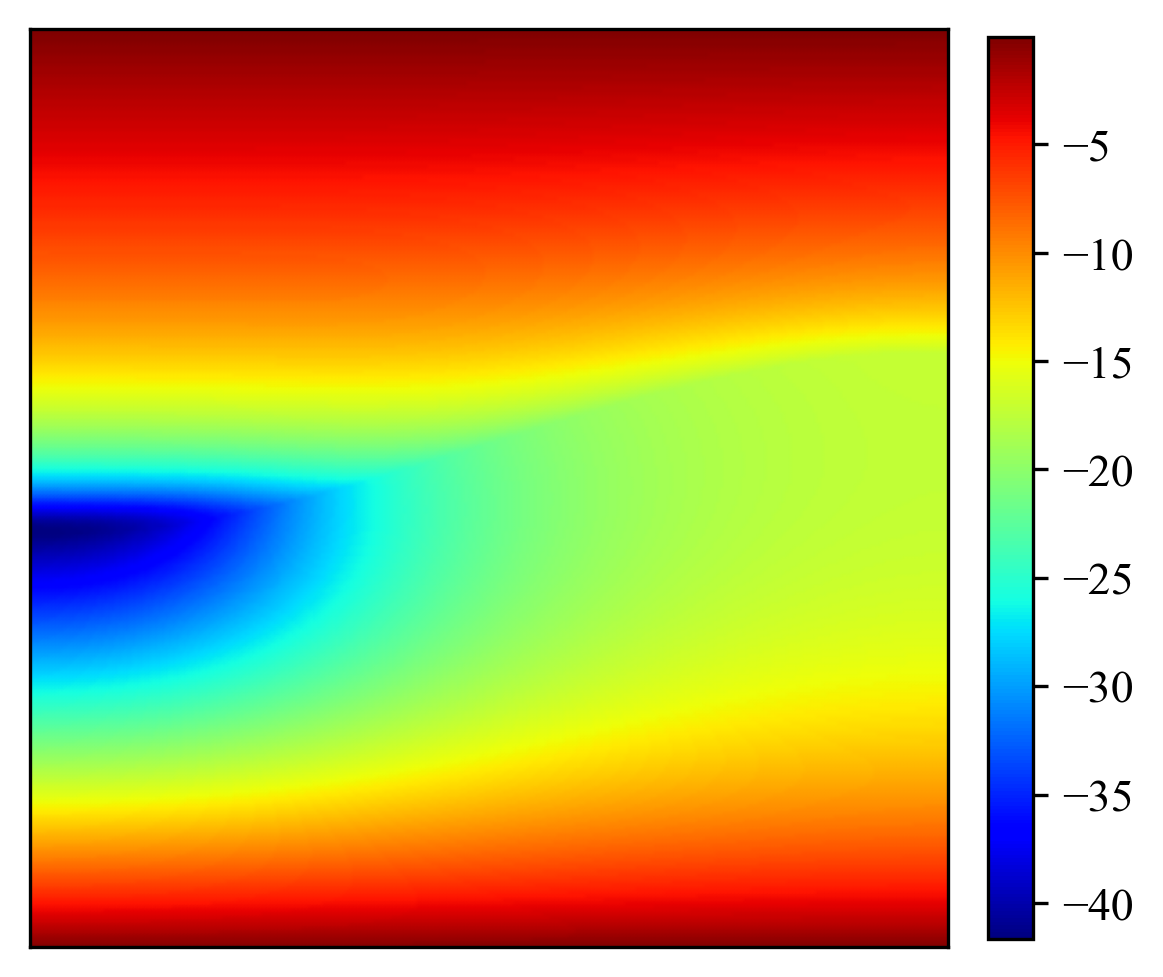}
      \includegraphics[width=0.24\textwidth]{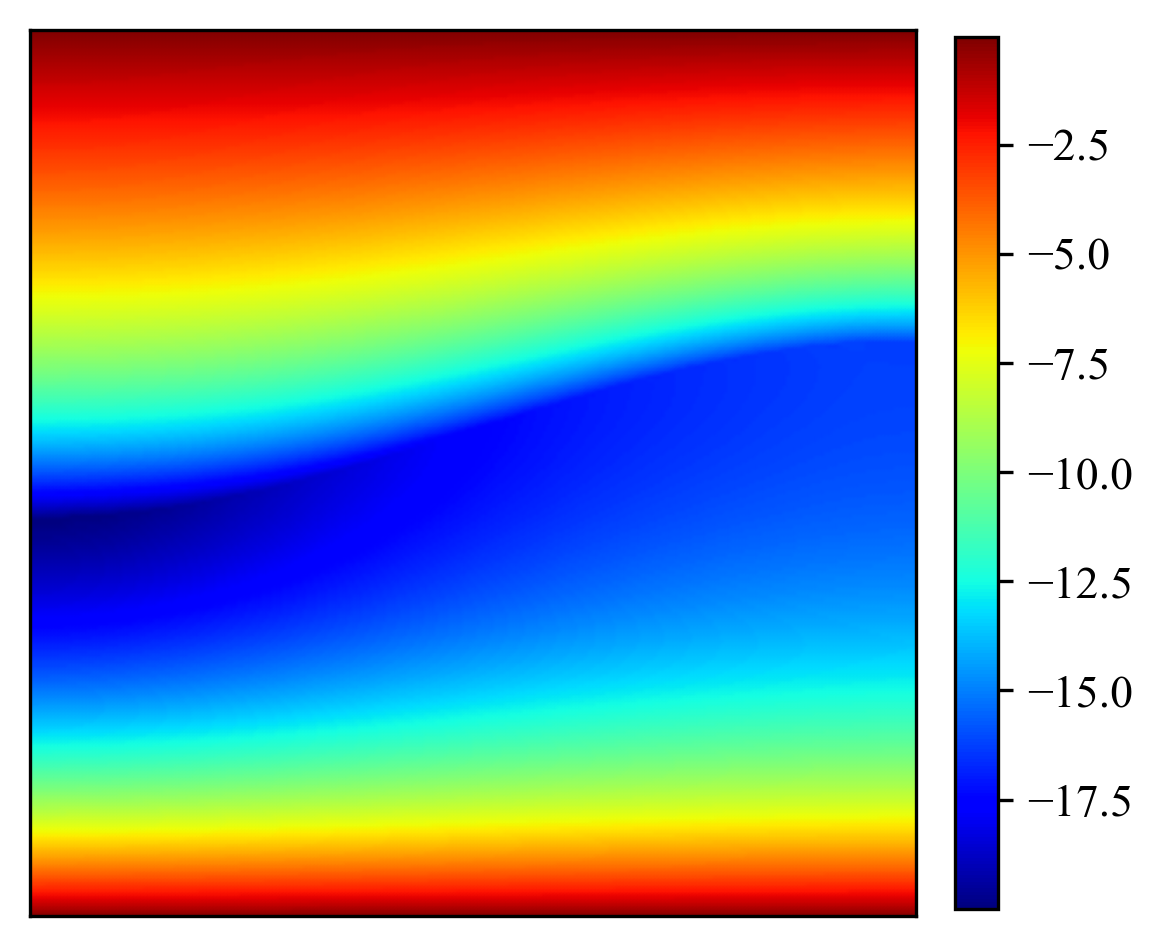}
      \includegraphics[width=0.24\textwidth]{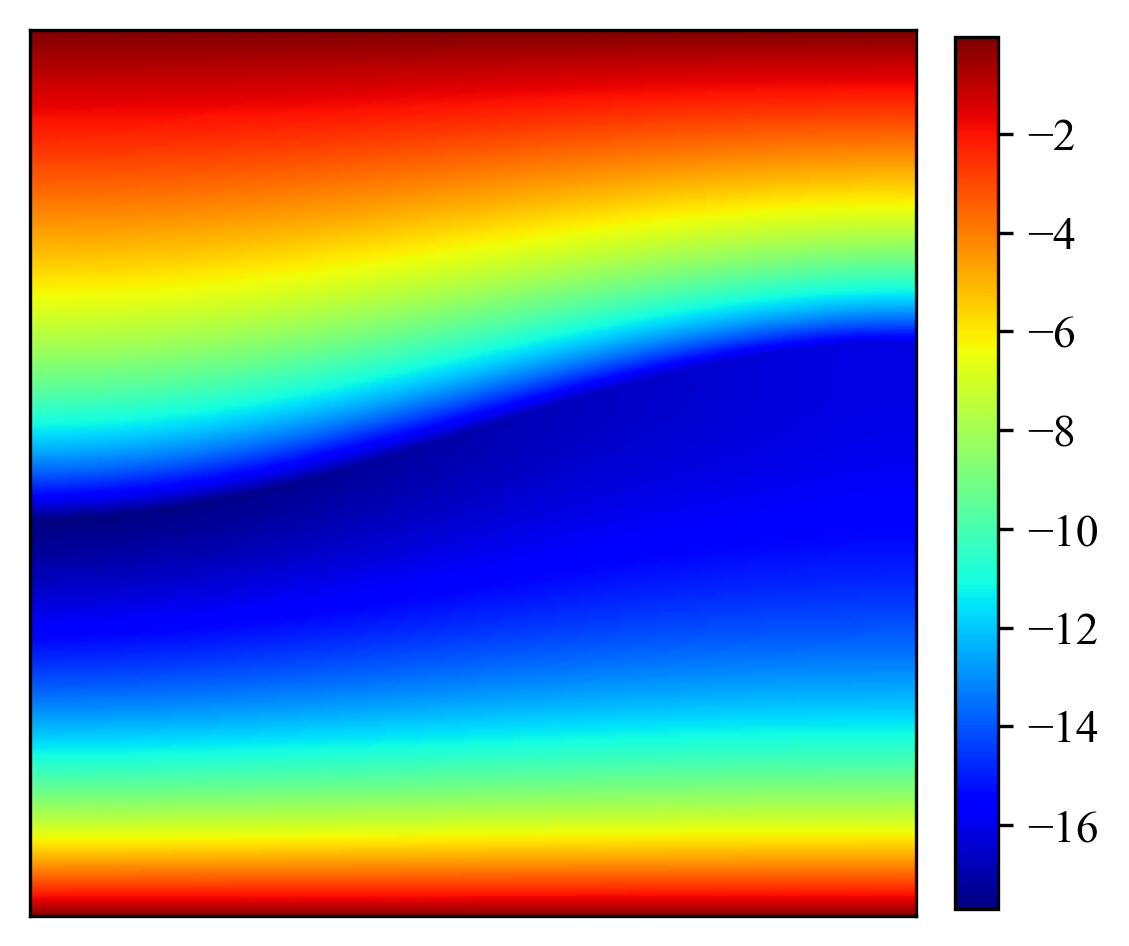}
    \end{center}
    \caption{Snapshots of the pressure head distribution at
      $t =
      0,\ 0.25,\ 0.5,\ 0.75,\ 1.0,\ 1.25,\ 1.5,\ 1.75\ \text{days}$,
      respectively,
      shown from left to right and top to bottom.
    }\label{fig:ex2d_02_pressure_head}
  \end{figure}

  \begin{figure}[H]
    \begin{center}
      \includegraphics[width=0.24\textwidth]{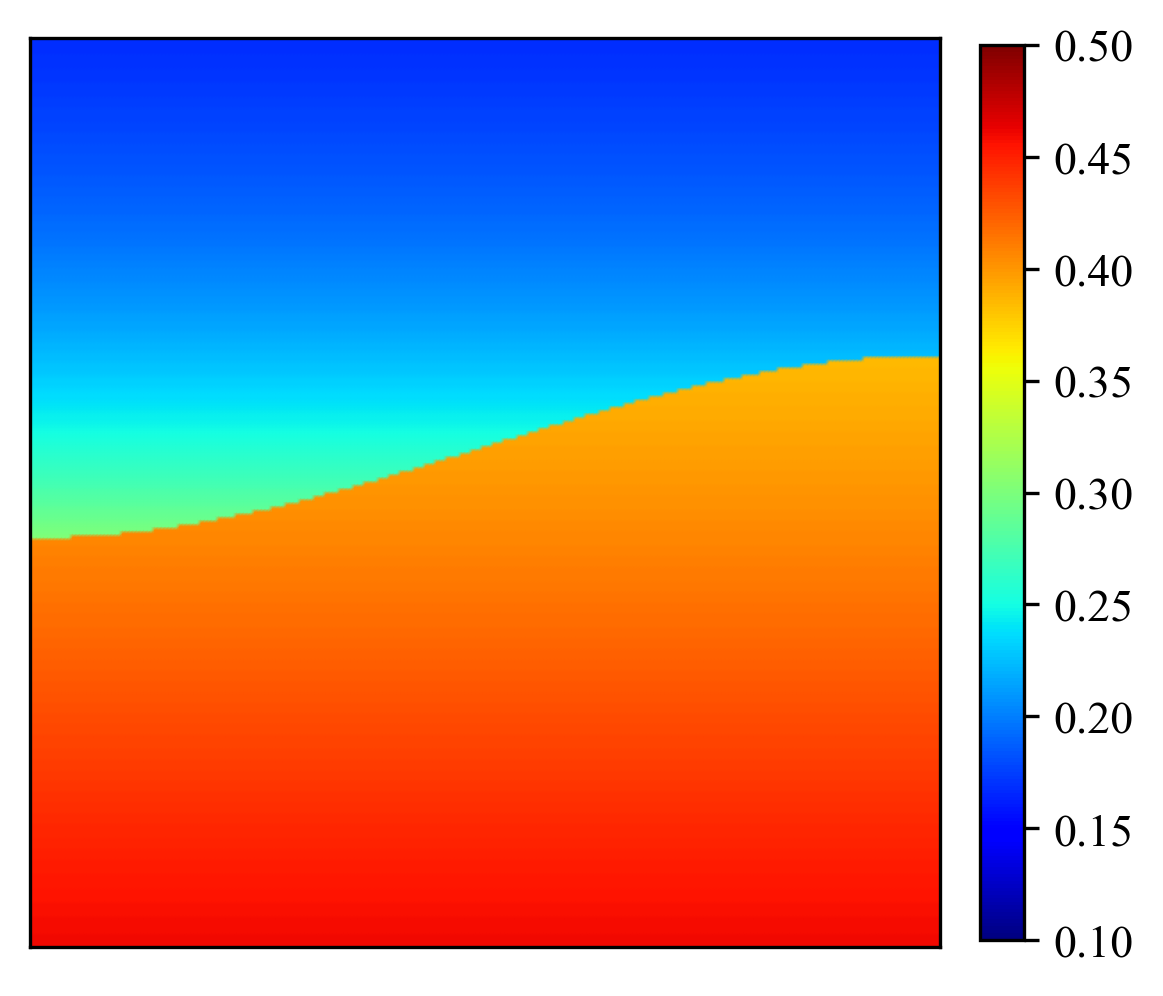}
      \includegraphics[width=0.24\textwidth]{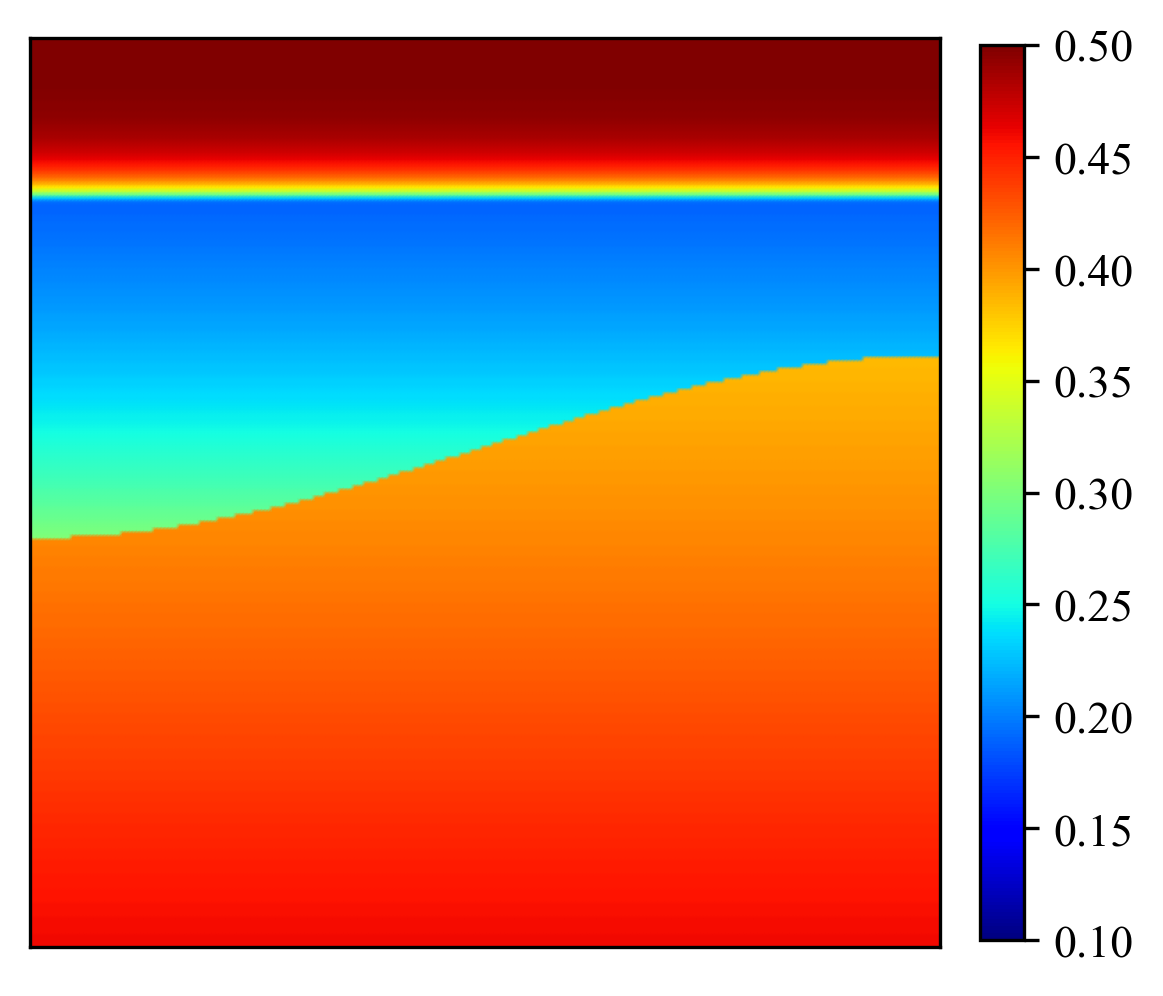}
      \includegraphics[width=0.24\textwidth]{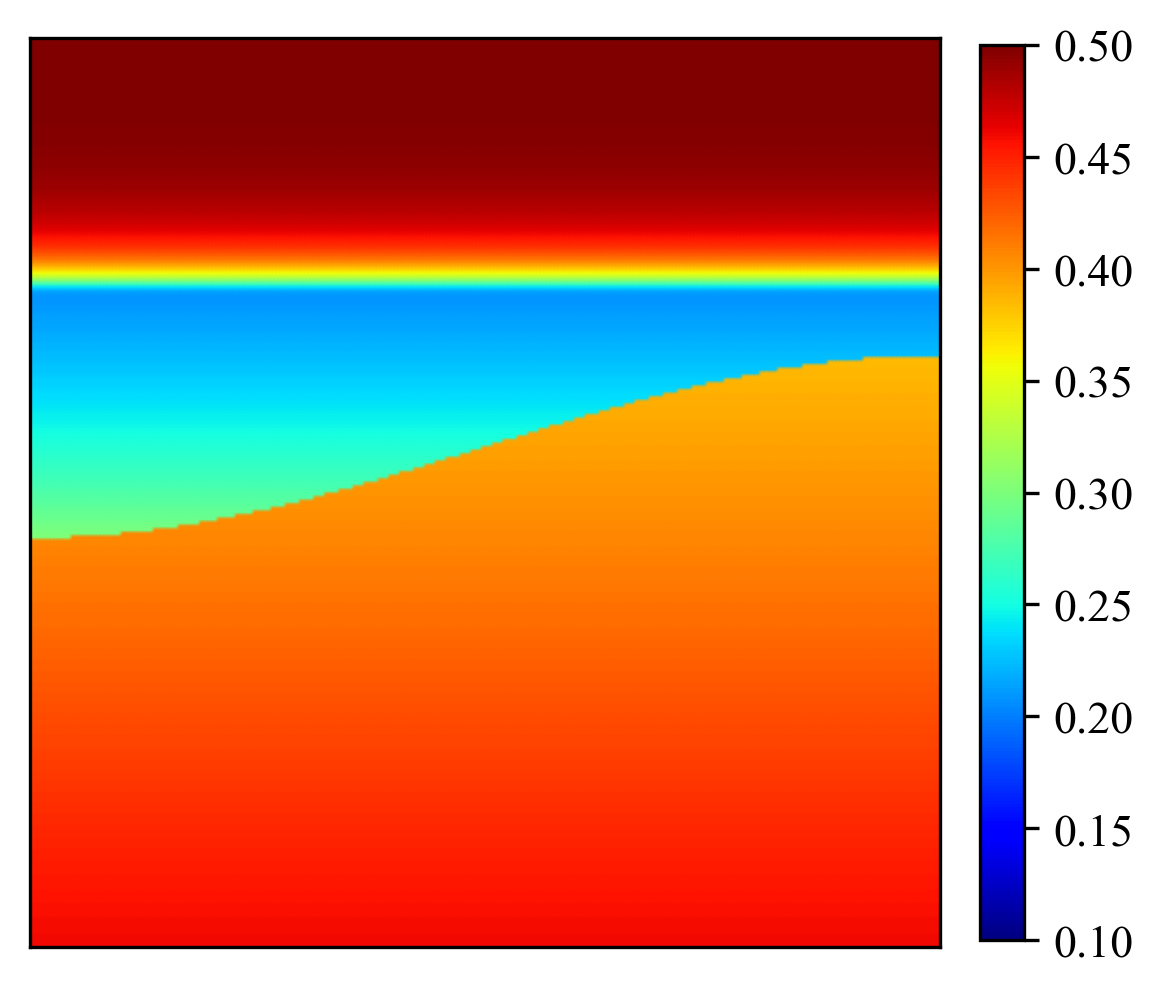}
      \includegraphics[width=0.24\textwidth]{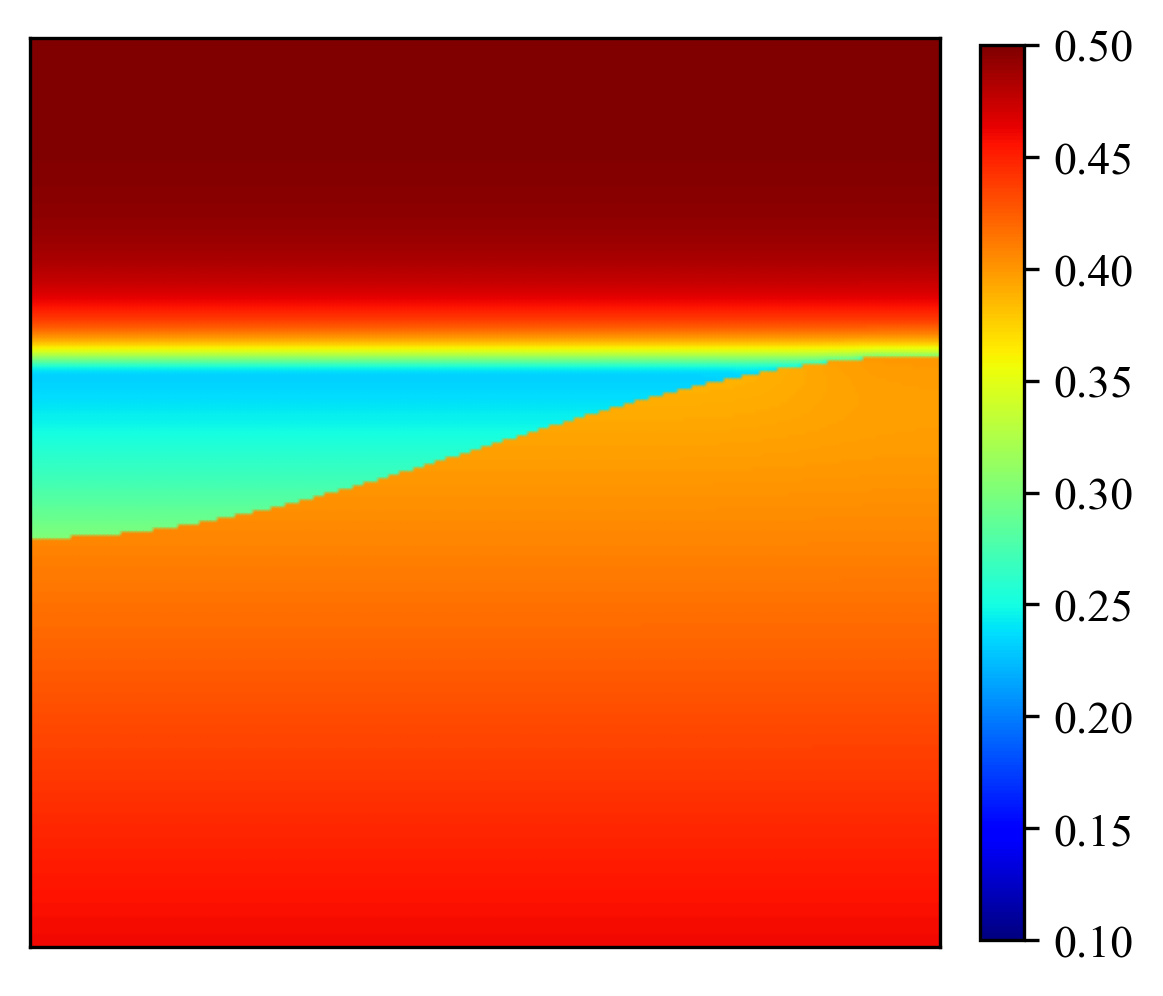}
      \includegraphics[width=0.24\textwidth]{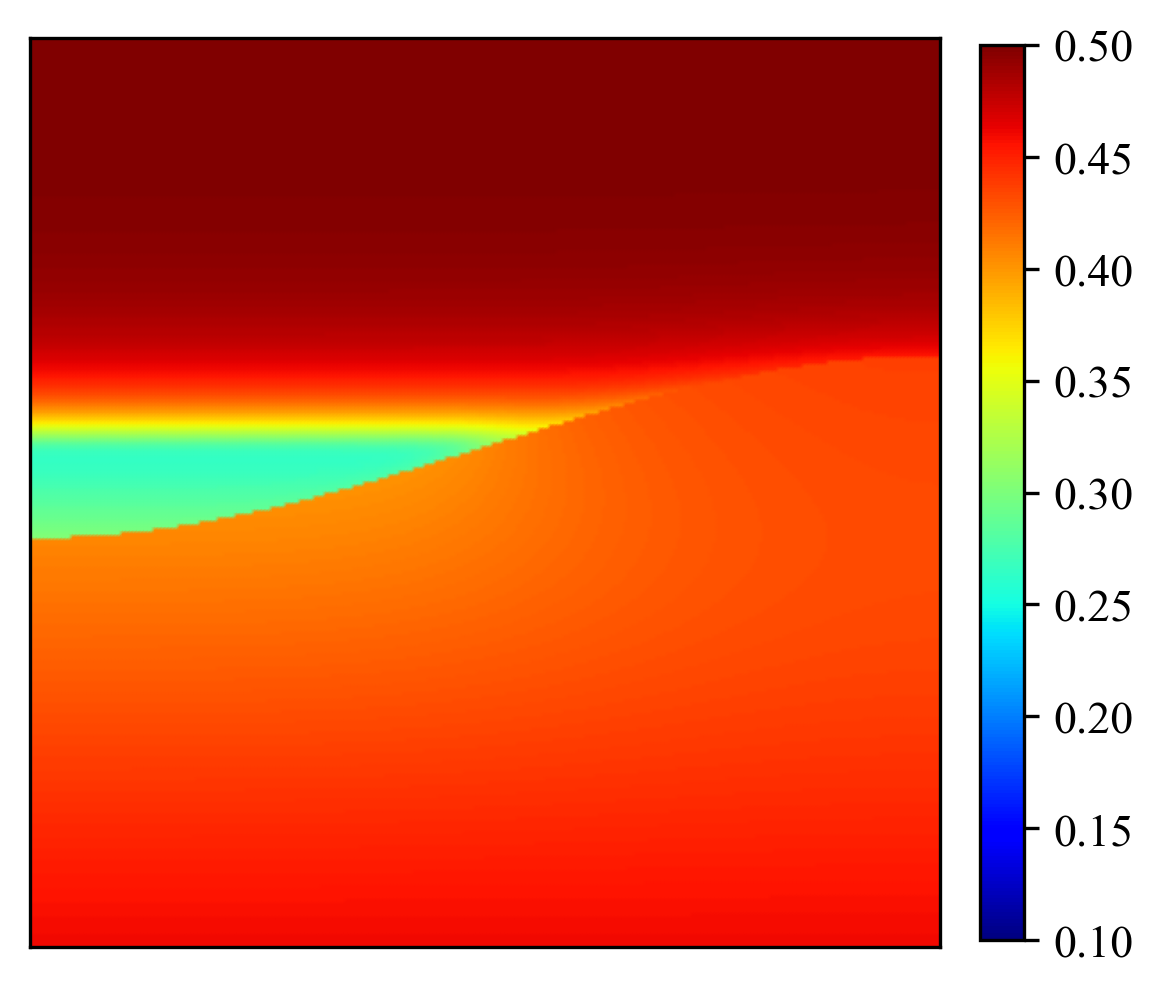}
      \includegraphics[width=0.24\textwidth]{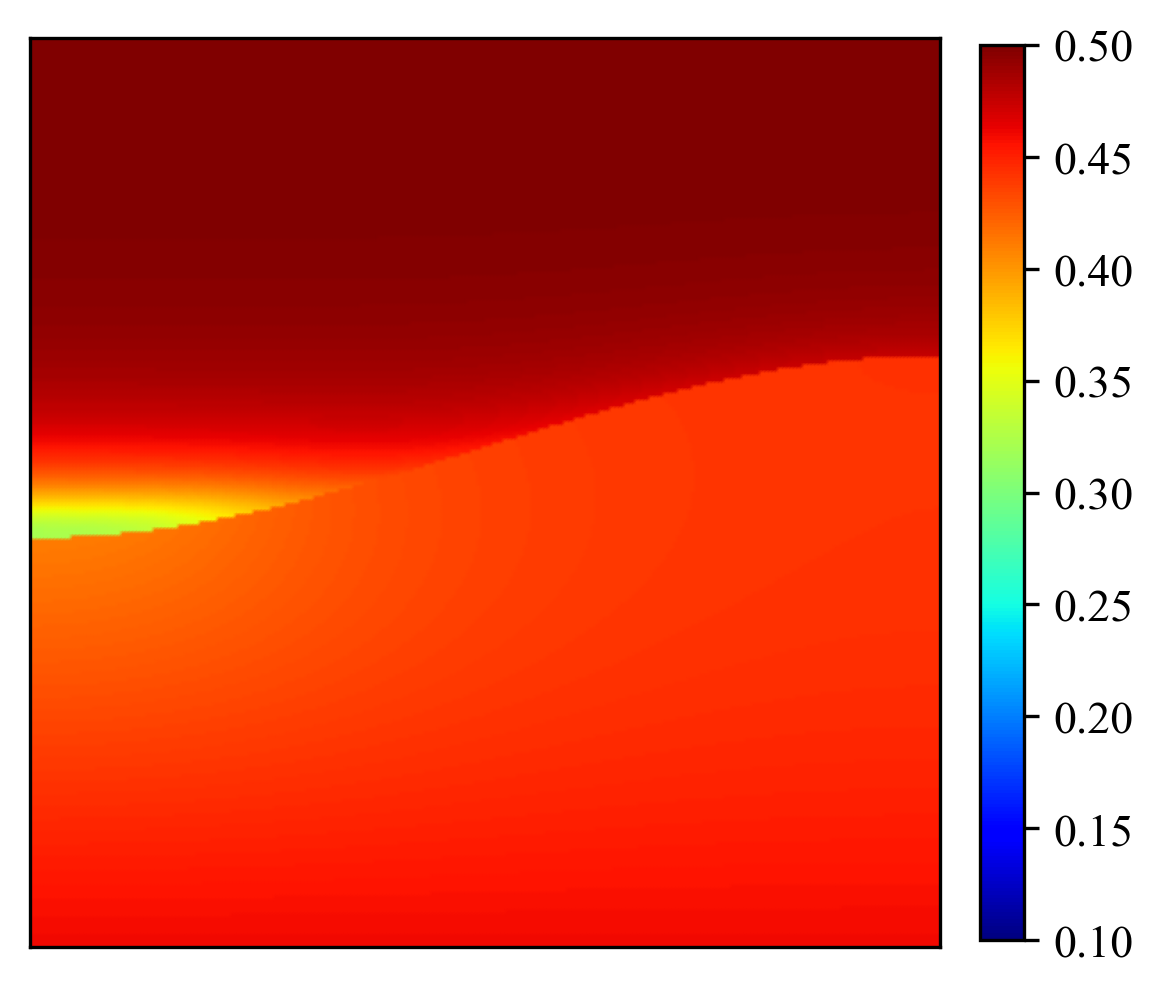}
      \includegraphics[width=0.24\textwidth]{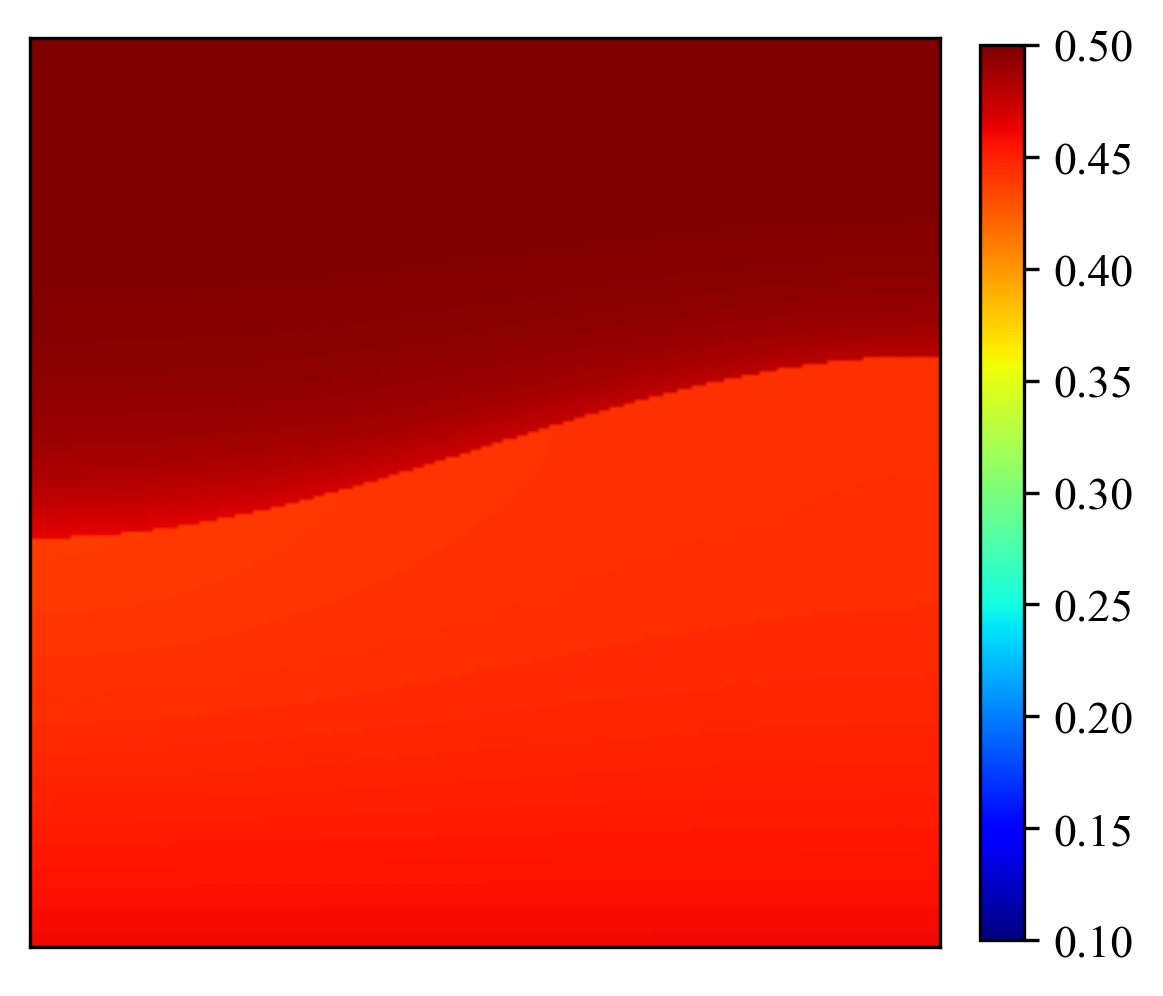}
      \includegraphics[width=0.24\textwidth]{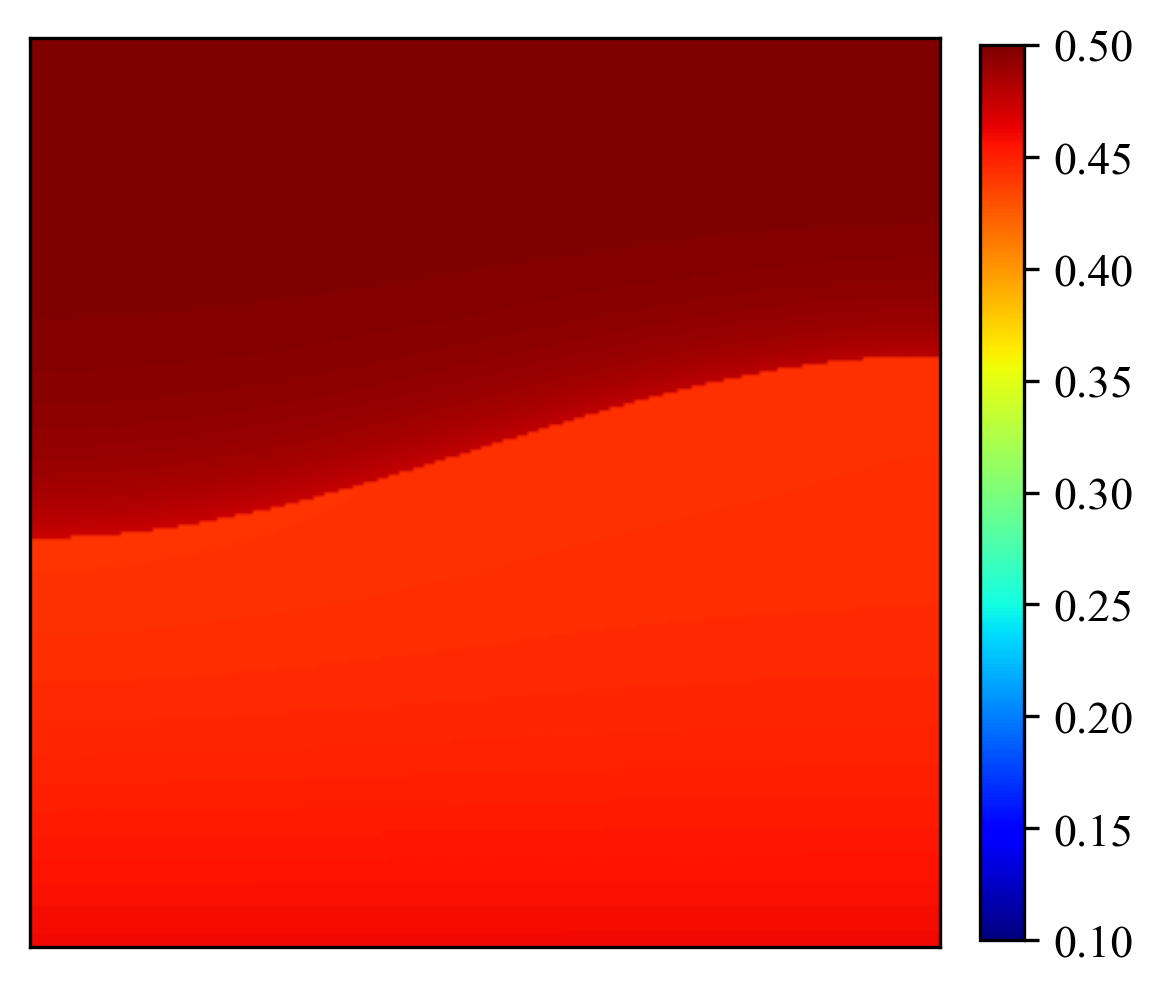}
    \end{center}
    \caption{Snapshots of the WRC at
      $t =
      0,\ 0.25,\ 0.5,\ 0.75,\ 1.0,\ 1.25,\ 1.5,\ 1.75\ \text{days}$,
      respectively,
    shown from left to right and top to bottom.}\label{fig:ex2d_02_wcf}
  \end{figure}

  Figures~\ref{fig:ex2d_02_pressure_head} display the evolution of the
  pressure head over time within the heterogeneous two-layer medium. At
  early times ($t=0$), the pressure field is primarily determined by the
  initial hydrostatic distribution. As time progresses ($t=0.25$ to $1.75$
  days), the wetting front propagates downward from the upper boundary
  and interacts with the curved material interface. The variations in
  color illustrate the gradual redistribution of pressure across both
  soil layers, with the interface visibly influencing the shape of the
  evolving pressure contours.

  The snapshots in Figure~\ref{fig:ex2d_02_wcf} show the corresponding
  evolution of the volumetric water content. Initially, the upper and
  lower layers exhibit distinct water content distributions consistent
  with the prescribed initial condition. As infiltration proceeds, the
  region of higher water content expands and moves downward, following
  the development of the pressure field. The interface geometry induces
  observable changes in the spatial distribution of water content, and
  the wetting pattern progressively spreads through both soil layers
  over the simulated time interval.

  The numerical solutions exhibit smooth infiltration fronts across the
  heterogeneous interface and are in close agreement with the results
  reported in \cite{arxiv25}. Additionally, the total CPU
  time used in  this numerical example is $t = 2320\mathrm{s}$.

  \section{Conclusion}

  In this work we developed and analyzed a nonlinear solver for the
  Richards equation with strongly nonlinear constitutive relations. We
  constructed a conservative second-order in space and first order in
  time finite-difference
  algorithm and represent the resulting scheme in a compact
  algebraic form. For this nonlinear system,  we proposed a nonlinear
  Gauss-Seidel
  iteration with triangular splitting, diagonal shifts, and a
  stabilization parameter, leading to pointwise scalar updates at each
  grid node. Under mild monotonicity assumptions on the storage and
  sink terms, we established $L^\infty$-contraction of the scheme  and
  derived explicit conditions on the stabilization parameters in terms
  of the discrete diffusion operator. Finally, we combined the NGS
  iteration with a two-grid FAS procedure for the fully discrete system
  and illustrated its applicability to Richards-type test
  problems. This solvers could serve as the workhorse for the future
  inverse problems on estimating the conductivity or even the
  constitutive relation via the Richards equation.

  \section*{Acknowledgements}
  Xuelong Gu's research is supported by NSF award OIA-2242812.  Qi
  Wang’s research is partially supported by NSF awards
  DMS-2038080, OIA-2242812, and an SC GAIN-CRP award.

  \bibliographystyle{elsarticle-num}
  \bibliography{ref}

  \end{document}